\newtheoremstyle{dotless}{}{}{\itshape}{}{\bfseries}{}{ }{}
\theoremstyle{plain}
\newtheorem{theorem}{Theorem}[section]
\newtheorem{corollary}[theorem]{Corollary}
\newtheorem{lemma}[theorem]{Lemma}
\newtheorem*{RAAG conjecture}{Action Dimension Conjecture for RAAGs}
\newtheorem*{Aconj}{Action Dimension Conjecture}
 \theoremstyle{definition}
\newtheorem{definition}[theorem]{Definition} 
\theoremstyle{remark}
\newtheorem*{example}{Example}
\newtheorem*{remark}{Remark}
\newtheorem*{acknowledge}{Acknowledgement}
\newcommand{\geps}{\varepsilon}
\newcommand{\gs}{\sigma}
\newcommand{\gt}{\tau}
\newcommand{\gG}{\Gamma}
\newcommand{\gD}{\Delta}
\newcommand{\diam}{diam}
\newcommand{\Out}{\operatorname{Out}}
\newcommand{\lk}{\operatorname{Lk}}
\newcommand{\vk}{\operatorname{vk}}
\newcommand{\vkt}{\vk_{\zz/2}}
\newcommand{\obdim}{\operatorname{obdim}}
\newcommand{\actdim}{\operatorname{actdim}}
\newcommand{\gdim}{\operatorname{gdim}}
\newcommand{\minus}{^{-1}}
\newcommand{\OL}{{O(L)}}
\newcommand{\Opp}{{Opp}}
\newcommand{\zz}{\mathbb{Z}}
\begin{document}
\title{Action Dimension of Lattices in Euclidean Buildings}
\author{Kevin Schreve}

\maketitle
\abstract{The action dimension of a group $\Gamma$ is the minimal dimension of a contractible manifold that $\Gamma$ acts on properly discontinuously. We show that if $\gG$ acts properly and cocompactly on a thick Euclidean building, then the action dimension is bounded below by twice the dimension of the building. We also compute the action dimension of $S$-arithmetic groups over number fields, partially answering a question of Bestvina, Kapovich, and Kleiner. 

\section{Introduction}
The \emph{action dimension} of a countable discrete group $\Gamma, \actdim(\Gamma)$, is the minimal dimension of contractible manifold $M$ that $\Gamma$ acts on properly discontinuously. If $\Gamma$ is torsion-free, then $M/\gG$ is a model for the classifying space $B\gG$, and hence $\actdim(\gG)$ is the minimal dimension of such a manifold model. If we assume that $\Gamma$ has a finite model for $B\gG$, a theorem of Stallings implies that $\actdim(\gG)$ is bounded above by twice the geometric dimension of $\gG$ \cite{stallings}.

The motivation for this paper is the following conjecture of Davis and Okun \cite{do01}, which claims that nontrivial $L^2$-Betti numbers of a discrete group should provide a lower bound for action dimension. This generalizes an older conjecture of Singer on the vanishing of the $L^2$-cohomology of closed aspherical manifolds.

\begin{Aconj}
If $\gG$ has nontrivial $i^{th} L^2$-Betti number, then $\actdim(\gG) \ge 2i$.
\end{Aconj}

For a nice introduction to $L^2$-cohomology, we refer the reader to \cite{eck00}. If $\Gamma$ is the fundamental group of a closed Riemannian manifold, then the $L^2$-Betti numbers of $\Gamma$ measure the size of the space of square summable harmonic forms on the universal cover. A positive solution to the Action Dimension Conjecture has several nice implications, in particular it implies a conjecture of Hopf-Thurston that the Euler characteristic of a closed, aspherical $2n$-manifold has sign $(-1)^n$.

Though both $L^2$-cohomology and action dimension seem difficult to compute, the Action Dimension Conjecture has been verified for many important classes of groups, such as lattices in Lie groups \cite{bf}, $\Out(F_n)$ \cite{bkk}, mapping class groups \cite{desp}, \cite{m00}, \cite{g91}, and most Artin groups \cite{ados}, \cite{dl}, \cite{dh}. 

We show the conjecture holds for groups that act properly and cocompactly on thick Euclidean buildings. The $L^2$-Betti numbers of these groups are concentrated in the dimension of the building, and we show that their action dimension is greater than or equal to twice this dimension.

\begin{theorem}\label{main1}
If $\Gamma$ is a cocompact lattice in a thick, $n$-dimensional Euclidean building, then $\actdim(\gG) \ge 2n$. If $\gG$ is torsion-free, then $\actdim(\gG) = 2n$.
\end{theorem}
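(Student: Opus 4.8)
The plan is to treat the upper and lower bounds separately; the upper bound is immediate and the lower bound rests on a van Kampen-type embedding obstruction.

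\textbf{Upper bound.} If $\gG$ is torsion-free, its cocompact action on the $n$-dimensional $\cat(0)$ building $X$ is free, so $X/\gG$ is a finite $n$-dimensional model for $B\gG$; hence $\gdim(\gG)\le n$, and the quoted theorem of Stallings gives $\actdim(\gG)\le 2\gdim(\gG)\le 2n$. Combined with the lower bound this yields $\actdim(\gG)=2n$.

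\textbf{Lower bound.} The plan is to run the obstructor-dimension machinery of Bestvina--Kapovich--Kleiner \cite{bkk}. Since $X$ is $\cat(0)$ with finite cell stabilizers, it is a model for the classifying space for proper $\gG$-actions, so it is enough to produce a finite complex with a proper, expanding map from its open cone into $X$ whose mod $2$ van Kampen obstruction is nonzero in the right degree. \emph{Step 1 (the obstructor):} fix a vertex $v$ of $X$ and let $L=\lk(v,X)$. Since $X$ is a thick, $n$-dimensional Euclidean building and is locally finite, $L$ is a finite thick spherical building of dimension $n-1$; by the Solomon--Tits theorem $L$ is homotopy equivalent to a wedge of $(n-1)$-spheres, and thickness forces the number of summands --- equivalently, $\dim_{\zz/2}\wt H_{n-1}(L;\zz/2)$ --- to be at least $2$. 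Each $\xi\in L$ is a direction at $v$; as a Euclidean building is geodesically complete, this extends to a geodesic ray $\gr_\xi$ from $v$, and these rays may be chosen continuously over each chamber of $L$. The map $g\colon L\times[0,\infty)\to X$, $g(\xi,t)=\gr_\xi(t)$, factors through the open cone $cL$ (of dimension $n$); it is proper because $d(v,\gr_\xi(t))=t$, and the $\cat(0)$ divergence estimate $d(\gr_\xi(t),\gr_{\xi'}(t))\ge 2t\sin(\tfrac12\angle_v(\xi,\xi'))$ shows it is expanding. \emph{Step 2 (conclusion):} granting the non-embeddability statement below, $cL$ does not embed in $\R^{2n-1}$, and the Bestvina--Kapovich--Kleiner theorem then forces $\actdim(\gG)\ge 2n$. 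Note that this argument uses only a proper action of $\gG$ on a manifold, so the lower bound needs no torsion-freeness (as in the statement), and working throughout with $\zz/2$ coefficients renders the Smith-theoretic issues coming from finite stabilizers harmless; the same nonvanishing is what makes $\btwo_n(\gG)\ne0$, so the theorem also confirms the Action Dimension Conjecture for these groups.

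\textbf{The key point.} What must be proved is that $cL$ does not embed in $\R^{2n-1}$; since $cL\hookrightarrow\R^{2n-1}$ iff $L\hookrightarrow S^{2n-2}$, this is the assertion that a thick spherical building $L$ of dimension $n-1$ does not embed in $S^{2(n-1)}$. For $n=1$ this is just ``$\ge 3$ points do not fit in $S^0$'', and for $n=2$ it is the non-planarity of the incidence graph of a thick generalized polygon, which follows from Euler's formula (girth $\ge 6$, valence $\ge 3$). For general $n$ the plan is to show the mod $2$ van Kampen obstruction of $L$ is nonzero by exhibiting, from the K\"unneth decomposition of $H^{n-1}(L;\zz/2)^{\otimes 2}$, a symmetric off-diagonal class $\ga\times\gb+\gb\times\ga$ --- available precisely because thickness supplies two independent classes $\ga,\gb$ --- and showing that it survives to the deleted product $L\times L\smallsetminus\Delta$ and represents the obstruction; alternatively, one can hunt for a combinatorially explicit finite subcomplex of $L$, obtained by propagating thickness through residues, that is a known non-embedder, though since thick buildings have large girth they do not literally contain the classical van Kampen--Flores complexes, so some care is needed there. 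This step is the main obstacle: Steps 1 and the passage to $\actdim$ are, up to bookkeeping, standard $\cat(0)$/geodesic-completeness and Bestvina--Kapovich--Kleiner inputs, and the upper bound is a one-line consequence of Stallings' theorem.
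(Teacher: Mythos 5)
Your upper bound and your general BKK framework are fine, but the lower bound has a genuine gap at exactly the step you flag as ``the main obstacle,'' and that step is not a bookkeeping matter --- it is the whole content of the theorem. You reduce everything to the claim that a finite thick spherical building $L=\lk(v,X)$ of dimension $n-1$ has $\vkt^{2n-2}(L)\ne 0$, and you offer two sketches: a K\"unneth-type symmetric class $\ga\times\gb+\gb\times\ga$ that should ``survive to the deleted product,'' and a hunt for a classical non-embedder inside $L$. Neither is a proof: the van Kampen obstruction lives in $H^{2n-2}(\mathcal{C}(L);\zz_2)$ and is not read off from $H^*(L\times L)$ --- restriction to the deleted product can kill such classes, and the obstruction class must actually be identified, not just a candidate cohomology class exhibited. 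In fact this non-embeddability statement for finite thick spherical buildings is only known in special cases (Tancer--Vorwerk handle type $A_n$ and some $B_n$), and the natural homological input one would use --- nonvanishing of $H_{n-1}(\Opp(C);\zz_2)$ for some chamber $C$, as in Theorem \ref{maincor} --- is itself open for general finite thick buildings; Abramenko's examples show that opposite complexes can be homologically degenerate, so thickness alone does not obviously supply the needed cycle. So as written, your argument rests on an unproved (and in this generality unknown) assertion.

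The paper avoids the finite link entirely and works at infinity, which is the missing idea. The visual boundary of a thick $n$-dimensional Euclidean building is a spherical building of dimension $n-1$ of \emph{infinite} thickness, and infinite thickness is what makes the homological input available: Lemma \ref{thick} produces a chamber $\Delta^+$ whose opposite complex $\Opp(\Delta^+)$ contains an apartment, hence has nontrivial $H_{n-1}(\cdot;\zz_2)$, and it is flag (Lemma \ref{flag}). One then does not try to obstruct the whole building, but embeds the doubled complex $D_\Delta(\Opp(\Delta^+))$ into the building at infinity via the bending homeomorphisms (Lemmas \ref{map}--\ref{embed}), and Theorem \ref{ados} says this doubled complex is a $(2n-2)$-obstructor. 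Since compactifying the $\cat(0)$ building by its visual boundary gives a $\mathcal{Z}$-structure, Theorem \ref{bkk} applies directly to this obstructor sitting in the boundary and yields $\actdim(\gG)\ge 2n$, with no coning construction over a vertex link needed. If you want to salvage your route, you would have to prove your key point for the particular finite building $\lk(v,X)$, which is strictly harder than what the paper needs; alternatively, rerun your argument with $L$ replaced by $D_\Delta(\Opp(\Delta^+))$ inside the boundary building, which is exactly the paper's proof.
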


The second statement follows immediately from Stallings' theorem, since the geometric dimension of such $\gG$ will be $n$. Our main application of this theorem is to compute the action dimension of $S$-arithmetic groups. These groups act properly on the product of a symmetric space and a Euclidean building. Bestvina and Feighn showed that lattices in symmetric spaces have action dimension equal to the dimension of the symmetric space \cite{bf}, and it was conjectured in \cite{bkk} that the action dimension of $S$-arithmetic groups was equal to the dimension of the symmetric space plus twice the dimension of the Euclidean building. We confirm the conjecture for $S$-arithmetic groups over number fields in Section \ref{S-arithmetic}.

To prove Theorem \ref{main1}, we use the \emph{obstructor dimension} method introduced by Bestvina, Kapovich, and Kleiner in \cite{bkk}, which involves finding subcomplexes of the visual boundary of our building that are hard to embed into Euclidean space. Surprisingly, this is what \emph{all} of the above computations of action dimension rely on.  For Euclidean buildings, the visual boundary admits the structure of a spherical building, and the following corollary of our method may be of independent interest.

\begin{theorem}\label{maincor}
Let $\mathcal{B}$ be a finite $k$-dimensional, spherical building, and fix a chamber $C$. Let $\Opp(C)$ denote the subcomplex of chambers opposite to $C$. If $H_k(\Opp(C), \mathbb{Z}_2) \ne 0$, then $\mathcal{B}$ does not embed into $\mathbb{R}^{2k}$. 
\end{theorem}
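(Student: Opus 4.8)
The plan is to use van Kampen–type obstruction theory, following the Bestvina–Kapovich–Kleiner strategy but now applied directly to the spherical building $\mathcal{B}$ together with the opposition structure. The key observation is that a $k$-dimensional complex $K$ fails to embed in $\mathbb{R}^{2k}$ whenever its "van Kampen obstruction" — a class in the equivariant cohomology $H^{2k}_{\mathbb{Z}/2}(K \times K \setminus \Delta; \mathbb{Z}_2)$ detected by the diagonal — is nonzero; concretely, it suffices to produce two $k$-cycles (or a $k$-cycle paired with itself) in $K$ that are forced to have odd algebraic intersection in any general-position map to $\mathbb{R}^{2k}$. So I would look for a $k$-cycle supported on chambers through $C$ and a $k$-cycle supported on $\Opp(C)$ that must link.

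**Step 1.** Fix the chamber $C$ and consider the star $\st(C)$ (or better, the closed "ball" of chambers not opposite to $C$) versus $\Opp(C)$. These are disjoint subcomplexes of $\mathcal{B}$, and by the combinatorial structure of spherical buildings they are "linked" in the sense that the building deformation retracts appropriately — this is where I would invoke Solomon–Tits: the building $\mathcal{B}$ is homotopy equivalent to a wedge of $k$-spheres, and more precisely, for the subcomplex $\Opp(C)$, there is a duality pairing $H_k(\Opp(C)) \otimes H_k(\text{something through } C) \to H_k(\mathcal{B})$ coming from the fact that every apartment containing $C$ contributes a sphere split into the hemisphere near $C$ and the hemisphere near the opposite chamber. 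The hypothesis $H_k(\Opp(C); \mathbb{Z}_2) \neq 0$ then feeds a nonzero class into a linking/intersection pairing.

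**Step 2.** Translate this linking into a van Kampen obstruction. Given a map $f : \mathcal{B} \to \mathbb{R}^{2k}$ in general position, I would separate $\mathbb{R}^{2k}$ by considering $f$ restricted to the two disjoint subcomplexes; the algebraic count of double points between the image of a $k$-chain $z$ near $C$ and a $k$-chain $w$ in $\Opp(C)$ is a cobordism invariant once $\partial z$ and $\partial w$ are understood, and the building structure forces $z$ to be a relative cycle whose boundary lies in $\Opp(C)$ (or vice versa). The nontriviality of $[w] \in H_k(\Opp(C);\mathbb{Z}_2)$ is exactly what makes this intersection number well-defined and equal to a nonzero topological quantity (essentially the linking number of the two "hemisphere" classes inside the sphere $\mathcal{B}\simeq \bigvee S^k$), which is a contradiction with the existence of an embedding.

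**The main obstacle** I anticipate is making precise the claim that the two relevant $k$-classes are genuinely linked — i.e. that the intersection pairing between the "$C$-side" homology and $H_k(\Opp(C))$ is nondegenerate (mod $2$). This requires a careful analysis of the poset of simplices of a spherical building: one needs that $\mathcal{B}$ is Cohen–Macaulay (true, by Solomon–Tits), that $\Opp(C)$ is itself highly connected or at least that its top homology injects into $H_k(\mathcal{B})$ under the natural map, and that the complementary piece (the union of apartments through $C$, or the subcomplex of non-opposite chambers) provides the Alexander-dual partner. I would handle this by working apartment-by-apartment: in a single apartment (a $k$-sphere triangulated as the Coxeter complex) the chamber $C$ and its unique opposite $-C$ split the sphere, and the $k$-cycle is the fundamental class; summing over apartments through $C$ and using that $\Opp(C)$ is covered by the "antipodal" chambers of these apartments, one assembles the global cycle whose $\mathbb{Z}_2$-homology class is detected precisely by $H_k(\Opp(C);\mathbb{Z}_2)$. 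Once that linking statement is in place, the embedding obstruction is the standard van Kampen argument, and the dimension count $k + k = 2k$ is exactly the borderline case where linking obstructs embedding.
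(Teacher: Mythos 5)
There is a genuine gap, and it is in the very mechanism you propose for detecting non-embeddability. You want to take a $k$-cycle $z$ supported near $C$ (on the star or the non-opposite chambers) and a $k$-cycle $w$ supported on $\Opp(C)$, and argue that any general position map $f:\mathcal{B}\to\mathbb{R}^{2k}$ must give $f(z)$ and $f(w)$ an odd number of intersection points. This can never happen: $z$ and $w$ have disjoint supports, so the count of double points between their images is exactly the mod $2$ homological intersection number of the singular cycles $f_*z$ and $f_*w$ in $\mathbb{R}^{2k}$, and since $H_k(\mathbb{R}^{2k};\mathbb{Z}_2)=0$ each of these cycles bounds, forcing that number to be even for \emph{every} map, embedding or not. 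In other words, the van Kampen class always evaluates to zero on configuration-space cycles of the product form $z\otimes w$ with $z,w$ disjointly supported cycles. The appeal to a ``linking number of the two hemisphere classes'' does not rescue this: in $\mathbb{R}^{2k}$ linking pairs cycles of dimensions summing to $2k-1$, so two $k$-cycles have no linking invariant, and a ``$k$-cycle paired with itself'' does not define a class in the deleted product at all, since the obstruction only sees pairs of \emph{disjoint} simplices. So the duality/Solomon--Tits pairing you anticipate as the main obstacle would not, even if established, produce a nonzero evaluation of $\vkt^{2k}(\mathcal{B})$.

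The paper's route is different precisely because of this. Instead of pairing two disjoint cycles, it locates inside $\mathcal{B}$ a copy of the \emph{doubled} complex $D_\Delta(\Opp(C))$: the opposite complex $\Opp(C)$ together with the extra chamber $C$ attached so that a chamber $\Delta\subset\Opp(C)$ and $C$ span an octahedralized simplex. Concretely, for each chamber $\sigma$ opposite to $C$ one takes the unique apartment containing $\sigma$ and $C$ and maps the octahedron $O(\sigma)$ onto convex hulls in that apartment via a ``bending homeomorphism''; Lemmas \ref{map}--\ref{embed} show these maps glue to an embedding of $D_\Delta(\Opp(C))$, and Lemma \ref{flag} shows $\Opp(C)$ is flag. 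The non-embeddability then comes from Theorem \ref{ados}: $\vkt^{2k}(D_\Delta(L))\neq 0$ exactly when $H_k(L;\mathbb{Z}_2)\neq 0$, applied to $L=\Opp(C)$. The relevant configuration-space cycle there is built from pairs of simplices interleaved between $L$ and its doubled part, not from a product of two disjoint cycles; this is the idea your sketch is missing, and without it (or some other valid nontrivial class in the deleted product) the argument cannot be completed along the lines you describe.
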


Therefore, the only way to embed these spherical buildings into Euclidean space is by appealing to general position.  This was shown earlier by Tancer and Worwerk \cite{tw14} for type $A_n$ and certain type $B_n$ buildings. In the general case, we do not know of a thick spherical building which does not have a chamber $\Delta$ with $H_k(\Opp(\Delta), \mathbb{Z}_2) \ne 0$.

In \cite{ados}, the action dimension of right-angled Artin groups was computed. The key tool was a computation of the van Kampen obstruction for certain simplicial complexes called  \emph{octahedralizations}. Our computation for spherical buildings relies on finding embedded octahedralizations based on $\Opp(\Delta)$ inside the spherical building.

This paper is structured as follows: In Sections \ref{vk} and \ref{octa}, we review the obstructor methods of \cite{bkk} and the computation of the van Kampen obstruction of the octahedralization. In Section \ref{sb}, we review spherical buildings and compute their van Kampen obstruction. In Section \ref{actbuild}, we show the Action Dimension Conjecture for lattices in Euclidean buildings, and in Section \ref{S-arithmetic}, we compute the action dimension of $S$-arithmetic groups.

\begin{acknowledge}
I thank Boris Okun for many helpful conversations. The author is partially supported by NSF grant DMS-1045119. This material is based upon work supported under NSF grant DMS-1440140 while the author was in residence at the Mathematical Sciences Research Institute in Berkeley, California in the Fall 2016 semester. The author wishes to thank MSRI for hosting him during this time.
\end{acknowledge}

\section{The van Kampen Embedding Obstruction}\label{vk}

The first cohomological obstruction to embedding a simplicial complex in $\mathbb{R}^n$ was developed by van Kampen \cite{vk}. In this section we review this obstruction and describe how a coarsening of this obstruction gives a lower bound for action dimension. 

Let $K$ a simplicial complex, and let $\Delta \subset K \times K$ denote the simplicial neighbourhood of the diagonal: $$\Delta := \{(\sigma,\tau) | \sigma,\tau \in K,  \sigma \cap \tau \ne \emptyset.\}$$ The complement $K \times K - \Delta$ admits a free $\mathbb{Z}_2$-action by flipping the factors. Let  $\mathcal{C}(K) = (K \times K - \Delta)/\mathbb{Z}_2$ be the \emph{simplicial configuration space of $K$}. So, $\mathcal{C}(K)$ is the space of unordered pairs of disjoint simplices of $K$: $$\mathcal{C}(K) := \{\{\sigma,\tau\} | \sigma,\tau \in K, \sigma \cap \tau = \emptyset\}.$$

\begin{definition}
Let $K$ be a $k$-dimensional simplicial complex, and let $f: K \rightarrow \mathbb{R}^n$ be a general position map. The images of two disjoint simplices $\sigma$ and $\tau$ with $\dim \gs + \dim \gt = n$ intersect under $f$ in a finite number of points. The \emph{$\mathbb{Z}_2$-valued van Kampen obstruction} $\vkt^n(K)  \in H^{n}(\mathcal{C}(K), \mathbb{Z}_2)$ is defined by $\vkt^n(\{\sigma,\tau\}) = |f(\sigma) \cap f(\tau)|\mod 2$. 
\end{definition}

One can show that the class of this cocycle does not depend on $f$, which implies that if $\vkt^n(K) \ne 0$, then $K$ does not embed into $\mathbb{R}^n$. In this case, we say $K$ is an \emph{$n$-obstructor}. An easy argument shows that such $K$ actually cannot embed into any contractible $n$-manifold. 

\begin{example}
Suppose $K_{3,3}$ is the Kuratowski graph, i.e the join of $3$ points with $3$ points. There is a class in $H_2(\mathcal{C}(K_{3,3}), \mathbb{Z}_2)$ which consists of all unordered pairs of disjoint simplices. By drawing the graph in the plane and counting intersections, it is easy to see that $\vkt^2(K)$ evaluates nontrivially on this class. 
\end{example}

We will now use the van Kampen obstruction to give a lower bound for the action dimension. This requires the following concept of Bestvina:

\begin{definition}
A \emph{$\mathcal{Z}$-structure} on a group $\Gamma$ is a pair $(\widetilde X, Z)$ of spaces satisfying the following four axioms:

\begin{itemize}

\item $\widetilde X$ is a Euclidean retract.

\item $X = \widetilde X - Z$ admits a covering space action of $\Gamma$ with compact quotient.

\item $Z$ is a $\mathcal{Z}$-set in $\widetilde X$, i.e. there exists a homotopy $\widetilde X \times [0,1] \rightarrow \widetilde X$ such that $H_0$ is the identity and $H_t(X) \subset X$ for all $t > 0$.

\item The collection of translates of a compact set in $X$ forms a null-sequence in $\widetilde X$, i.e. for every open cover $\mathcal{U}$ of $\widetilde X$ all but finitely many translates are contained in a single element of $U$.
\end{itemize}
\end{definition}

A space $Z$ is a \emph{boundary} of $\Gamma$ if there is a $\mathcal{Z}$-structure $(\widetilde X, Z)$ on $\Gamma$. For example, if $\gG$ acts properly and cocompactly on a $CAT(0)$ space $X$, then compactifying $X$ by the visual boundary $\partial X$ gives a $\mathcal{Z}$-structure on $\Gamma$.

\begin{theorem}[\cite{bkk}]\label{bkk}
Suppose $Z$ is a boundary of a group $\Gamma$, and that $K$ is an embedded $n$-obstructor in $Z$. Then $\actdim(\gG) \ge n+2$.
\end{theorem}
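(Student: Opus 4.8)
The plan is to run the \emph{obstructor dimension} argument of Bestvina, Kapovich, and Kleiner, reasoning by contradiction: suppose $\gG$ acts properly discontinuously on a contractible $d$-manifold $M$ with $d\le n+1$; the goal is to contradict $\vkt^{n}(K)\ne 0$. Replacing $M$ by its interior (still contractible by a collar argument, and the $\gG$-action restricts), we may take $\partial M=\emptyset$; and since $\gG$ then also acts properly discontinuously on the contractible $(n+1)$-manifold $M\times\R^{n+1-d}$, we may assume $d=n+1$.

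The heart of the construction is to convert the embedded obstructor $K\subset Z$ and the $\mathcal Z$-structure $(\wt X,Z)$ into a proper map of an infinite cone into $M$. Put a proper $\gG$-invariant metric on $X=\wt X-Z$ (possible because the $\gG$-action on $X$ is cocompact), pick a basepoint $x_{0}$, and note that the orbit map $\gG\to M$ combined with cocompactness on $X$ gives a coarsely well-defined proper ``coarse map'' $q\colon X\to M$ (proper since the $\gG$-action on $M$ is proper). From the $\mathcal Z$-set homotopy $H\colon\wt X\times[0,1]\to\wt X$ restricted to $K$, after reparametrisation, one builds a proper map
\[
\Phi\colon\operatorname{cone}(K):=\bigl(K\times[0,\infty)\bigr)/\bigl(K\times\{0\}\bigr)\longrightarrow X
\]
taking the cone point to $x_{0}$, radial near the cone point, whose restriction to the sphere $K\times\{T\}$ tends to the inclusion $K\hookrightarrow Z$ as $T\to\infty$; properness uses the null-sequence axiom and the fact that $K$ is a compact subset of the closed $\mathcal Z$-set $Z$, while embeddedness of $K$ ensures that rays over disjoint simplices diverge. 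Composing with $q$ and filling in skeleton-by-skeleton — using that $M$ is contractible, so any coarse map of a locally finite complex into $M$ is realised by a continuous proper map, uniquely up to proper homotopy — produces a continuous proper map $f\colon\operatorname{cone}(K)\to M$ with the same features.

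The contradiction then comes from a van Kampen count at the ends of $f$. Restrict $f$ to a compact truncation $\operatorname{cone}(K)_{[\delta,T']}$ and perturb it into general position in the $(n+1)$-manifold $M$. Fix a $\mathbb{Z}_2$-cycle $c$ in $\mathcal C(K)$ representing a class on which $\vkt^{n}(K)$ is evaluated; it is a sum of pairs $\{\sigma,\tau\}$ of disjoint simplices with $\dim\sigma+\dim\tau=n$. For each such pair $f(\operatorname{cone}(\sigma))$ and $f(\operatorname{cone}(\tau))$ have dimensions $\dim\sigma+1$ and $\dim\tau+1$ adding to $n+2=\dim M+1$, so their intersection $L_{\sigma,\tau}$ is a compact $1$-manifold with boundary. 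Identifying $\partial L_{\sigma,\tau}$: no endpoints lie over the outer sphere $K\times\{T'\}$, because there $f$ is modelled on the \emph{embedding} $K\hookrightarrow Z$ and disjoint simplices have disjoint images; the endpoints over the side faces cancel once summed over $c$ (this is precisely $\partial c=0$); and the endpoints near the cone point are enumerated by the van Kampen obstruction of the link map $K\to S^{n}$ induced by $f$, which represents $\vkt^{n}(K)$. Since a compact $1$-manifold has an even number of boundary points, summing over $c$ gives $\langle\vkt^{n}(K),[c]\rangle\equiv 0$ modulo $2$; as $c$ was arbitrary, $\vkt^{n}(K)=0$, a contradiction, so $\actdim(\gG)\ge n+2$.

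I expect the middle step to be the main obstacle: extracting an honest continuous proper map $f\colon\operatorname{cone}(K)\to M$ whose behaviour on large spheres genuinely remembers that $K$ is embedded in $Z$, out of the purely coarse input of the $\mathcal Z$-structure and the proper action — this is where the $\mathcal Z$-set and null-sequence axioms and the contractibility of $M$ are all consumed, and where one must argue that the relevant end behaviour and transversality data survive composition with the (possibly badly distorted) coarse map $q$. By comparison, the general-position and intersection-counting bookkeeping in the last paragraph, carried out only on compact truncations, is standard in this circle of ideas, though making the identification of endpoint counts with the van Kampen pairing fully precise still requires care.
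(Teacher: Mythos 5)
You should first note that the paper contains no proof of this statement to compare against: Theorem \ref{bkk} is quoted from \cite{bkk}, and the only in-text commentary is the heuristic about an injective boundary map $Z \to \partial M \cong S^n$, which the paper itself immediately disavows as false in general. Measured against the actual argument of \cite{bkk}, your outline follows the right route, and it is the same two-step strategy the source uses: (i) use the $\mathcal{Z}$-set homotopy and the null-sequence axiom to convert the embedded obstructor $K \subset Z$ into a proper map of $\operatorname{Cone}(K)$ into $X$ whose cones over disjoint simplices diverge --- i.e.\ exactly a proper expanding map in the sense of Section \ref{vk}, so that $\obdim(\gG) \ge n+2$; and (ii) the general-position intersection count in an $(n+1)$-manifold over pairs of disjoint simplices with $\dim\sigma + \dim\tau = n$, contradicting $\vkt^{n}(K) \ne 0$, which is the content of Theorem \ref{bkkmain}. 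In that sense your proposal is more faithful to the real proof than the paper's heuristic.

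However, as a standalone proof there is a genuine gap, and it sits exactly where you flag it, so it cannot be waved through. The assertion that ``any coarse map of a locally finite complex into $M$ is realised by a continuous proper map, uniquely up to proper homotopy'' is not available: $M$ carries only a properly discontinuous action, not a cocompact one, so $M$ need not be uniformly contractible, and skeleton-by-skeleton filling gives no control on the diameters of filled simplices; properness of the resulting map (and hence compactness of the intersection loci $L_{\sigma,\tau}$) is precisely the technical work of \cite{bkk}, not a formal consequence of contractibility. Relatedly, after composing with the coarse map $q$ there is no sense in which $f$ restricted to a large sphere $K \times \{T'\}$ is ``modelled on the embedding $K \hookrightarrow Z$'': the embedding of $K$ in $Z$ is consumed once, in step (i), to guarantee divergence of $\operatorname{Cone}(\sigma)$ and $\operatorname{Cone}(\tau)$ for disjoint $\sigma,\tau$, and in $M$ only that metric divergence survives. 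The endpoint analysis of the $1$-manifolds $L_{\sigma,\tau}$ (no boundary at the outer end, cancellation over $\partial c = 0$, identification of the count near the cone point with $\langle \vkt^{n}(K),[c]\rangle$ via a general-position link map $K \to S^{n}$) must therefore be run using divergence and properness alone, which is exactly how the expanding condition enters in \cite{bkk}; as written, your outer-sphere justification appeals to structure that does not exist in $M$.
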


Heuristically, if $\Gamma$ admits a $\mathcal{Z}$-structure and acts properly on a contractible $(n+1)$-manifold $M$, then there would be an injective boundary map $Z \rightarrow \partial M$. Since $M$ is contractible, $\partial M$ should be a $n$-sphere. This would contradict $K$ being an $n$-obstructor, and hence prove Theorem \ref{bkk}. Of course, these optimistic statements are false in general, and the proof of Theorem \ref{bkk} requires much more work.

\begin{example}
Let $\Gamma = F_2 \times F_2 $ be the direct product of two finitely generated free groups. $\Gamma$ acts properly and cocompactly on a product of trees, whose visual boundary is a product of Cantor sets. This contains the graph $K_{3,3}$, so by Theorem \ref{bkk}, $\actdim(\gG) = 4$. More generally, Bestvina, Kapovich, and Kleiner showed that the $n$-fold product of free groups has $\actdim = 2n$, using the fact that the $n$-fold join of $3$ points has $\vkt^{2n-2} \ne 0$.
\end{example}

When we compute the action dimension of $S$-arithmetic groups over number fields, we use the slightly more general concept of \emph{obstructor dimension}, denoted $\obdim(\Gamma)$. For simplicity, we only give the definition for type VF groups. We lose no generality since $S$-arithmetic groups over number fields are virtually of finite type. 

Let $K$ be a simplicial complex, and let $\text{Cone}(K) := K \times [0,\infty)/ (K \times 0)$ denote the infinite cone on $K$. Given a triangulation of $\text{Cone}(K)$, we set every edge to have length $1$ and equip $\text{Cone}(K)$ with the induced path metric.

\begin{definition}
Let $X$ be a proper metric space and $K$ a simplicial complex. A map $h: \text{Cone}(K) \rightarrow X$ is \emph{expanding} if for every $\gs$ and $\gt$ in $K$ with $\sigma \cap \tau = \emptyset$ the images $\text{Cone}(\gs)$ and $\text{Cone}(\gt)$ \emph{diverge}, i.e. for every $D > 0$ there exists $t \in \mathbb{R}^+$ such that $h(\sigma \times [t,\infty])$ and $h(\tau \times [t,\infty])$ are $> D$ apart in $X$.
\end{definition}

\begin{definition}
Let $\gG$ be a discrete group, and assume $\gG$ acts properly and cocompactly by isometries on a contractible proper metric space $X$. Then $\obdim(\gG)$ is the maximal $n+2$ such that there is a proper expanding map $h: \text{Cone}(K) \rightarrow X$ with $K$ an $n$-obstructor.
\end{definition}

The following is the main theorem of \cite{bkk}.

\begin{theorem}\label{bkkmain}
$\obdim(\gG) \le \actdim(\gG)$.
\end{theorem}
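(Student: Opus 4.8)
The plan is to prove the inequality one dimension at a time: fix an $n$-obstructor $K$ and a proper expanding map $h\colon\text{Cone}(K)\to X$ realizing $\obdim(\gG)=n+2$, fix an arbitrary contractible manifold $W$ on which $\gG$ acts properly discontinuously, and show $\dim W\ge n+2$. Since $\actdim(\gG)$ is the infimum of $\dim W$ over all such $W$, this yields $\obdim(\gG)\le\actdim(\gG)$; if $\obdim(\gG)=\infty$ one repeats the argument for every finite $n+2\le\obdim(\gG)$, and if $\gG$ acts properly discontinuously on no contractible manifold the statement is vacuous. The proof then splits into two parts: transport the expanding map from $X$ to $W$, and rule out a proper expanding map from the cone on an $n$-obstructor into a contractible manifold of dimension at most $n+1$.

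\textbf{Transporting the map.} Give $W$ a proper $\gG$-invariant metric (for instance a complete $\gG$-invariant Riemannian metric built from an equivariant partition of unity), and give $X$ a $\gG$-cocompact $\gG$-CW structure. Because $W$ is contractible, equivariant obstruction theory --- extending skeleton by skeleton over orbit representatives of cells, all obstruction groups vanishing since $\pi_k(W)=0$ --- produces a $\gG$-equivariant map $\phi\colon X\to W$. I claim $\phi$ is proper and uniformly metrically proper. For properness, let $C\subseteq W$ be compact and $D\subseteq X$ a compact fundamental domain; a translate $\gamma D$ meets $\phi^{-1}(C)$ only if $\gamma\phi(D)$ meets $C$, which by proper discontinuity of $\gG\curvearrowright W$ happens for only finitely many $\gamma$, so $\phi^{-1}(C)$ is closed and contained in finitely many translates of $D$, hence compact. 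For uniform metric properness, if $d_W(\phi x_i,\phi y_i)$ stayed bounded while $d_X(x_i,y_i)\to\infty$, translate (using invariance of both metrics under the isometric actions) so that $x_i\in D$; then $\phi(x_i)$ lies in the compact set $\phi(D)$ and each $\phi(y_i)$ in a fixed closed metric neighborhood of $\phi(D)$, compact because $W$ is proper, whence $y_i$ lies in the compact set $\phi^{-1}(\text{that neighborhood})$ --- contradicting $d_X(x_i,y_i)\to\infty$. Consequently $g:=\phi\circ h\colon\text{Cone}(K)\to W$ is proper (a composition of proper maps) and still expanding: given $D'>0$, uniform metric properness gives $D>0$ with $d_X(a,b)>D\Rightarrow d_W(\phi a,\phi b)>D'$, so a cone parameter $t$ witnessing $D$-divergence of $\text{Cone}(\sigma)$ and $\text{Cone}(\tau)$ in $X$ witnesses $D'$-divergence of their images in $W$.

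\textbf{The core estimate.} It remains to prove: if $K$ is an $n$-obstructor and $g\colon\text{Cone}(K)\to W$ is a proper expanding map into a contractible manifold $W$, then $\dim W\ge n+2$. Suppose $\dim W=m\le n+1$. It suffices to treat $m=n+1$, since a smaller $W$ gives a stronger contradiction: a proper expanding map into a contractible $W^n$ yields one into the contractible $(n+1)$-manifold $W^n\times\mathbb{R}$. So assume $\dim W=n+1$; I will contradict $\vkt^n(K)\ne 0$. Put $g$ in general position (legitimate, $W$ being a manifold). The expansion hypothesis is exactly what localizes the picture: for disjoint simplices $\sigma,\tau$ of $K$ the images $g(\text{Cone}(\sigma))$ and $g(\text{Cone}(\tau))$ can intersect only inside a fixed compact subset of $W$, and hence, by properness of $g$, in a \emph{compact} set. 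When $\dim\sigma+\dim\tau=n$, the cones have dimensions summing to $n+2=\dim W+1$, so generically this compact intersection is a closed $1$-manifold; its boundary has even cardinality and lies along the intersections of each cone with the boundary faces of the other --- that is, along the lower-degree intersection data carried by faces of $\sigma$ and $\tau$. Collecting these parity identities over all disjoint pairs, and using that a contractible $m$-manifold is homologically Euclidean at infinity (Poincaré--Lefschetz duality gives $H^m_c(W;\mathbb{Z}_2)\cong\mathbb{Z}_2$ and $H^k_c(W;\mathbb{Z}_2)=0$ for $k<m$), one runs van Kampen's double-point count with $W$ in place of $\mathbb{R}^m$ --- the extra cone coordinate absorbing the unit discrepancy between $m=n+1$ and $n$ --- to conclude, roughly, that the class $\vkt^n(K)\in H^n(\mathcal{C}(K);\mathbb{Z}_2)$ must vanish. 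This is the desired contradiction, so $m\ge n+2$; combined with the first part, $\dim W\ge n+2$ and the theorem follows.

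\textbf{Main obstacle.} The transport step is soft point-set topology. The real work is the core estimate, and inside it the cohomological bookkeeping: pinning down exactly which strata of the configuration complex $\mathcal{C}(K)$ receive the boundary contributions of the intersection $1$-manifolds, checking that these contributions assemble into an honest coboundary, and replacing $\mathbb{R}^m$ by an arbitrary contractible $m$-manifold by way of its homology at infinity. A secondary point to be dispatched is that obstructor dimension does not depend on the chosen contractible proper metric space $X$ --- for type VF groups this is handled by passing to a finite-index subgroup with a cocompact classifying space, which is the only place the type VF hypothesis enters.
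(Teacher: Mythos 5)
The paper does not actually prove this statement; it is quoted as the main theorem of Bestvina--Kapovich--Kleiner \cite{bkk}, so the benchmark is their argument, which is substantially more involved than what you outline. Your transport step is essentially sound for torsion-free groups (and your properness/uniform-properness argument for the equivariant map $\phi\colon X\to W$ is fine), but note one genuine flaw there as written: ``all obstruction groups vanish since $\pi_k(W)=0$'' only constructs an equivariant map when the action on $X$ is free. If $\gG$ has torsion, extending over a cell with finite stabilizer $G_\sigma$ requires mapping into the fixed set $W^{G_\sigma}$, which can be empty or non-contractible for properly discontinuous actions on contractible manifolds; you must first pass to a torsion-free finite-index subgroup (available in the type VF setting, and you would then need to check $\obdim$ and $\actdim$ behave correctly under this passage), which you only invoke for the unrelated issue of independence of $X$.

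The decisive gap, however, is the ``core estimate.'' The claim that a proper expanding map $\mathrm{Cone}(K)\to W^{n+1}$ into a contractible manifold forces $\vkt^{n}(K)=0$ \emph{is} the theorem: it is the technical heart of \cite{bkk}, and your paragraph only sketches it (``one runs van Kampen's double-point count with $W$ in place of $\mathbb{R}^m$ \dots to conclude, roughly, that the class must vanish''), while your own ``main obstacle'' paragraph concedes that the key steps --- identifying which strata of $\mathcal{C}(K)$ receive the boundary contributions of the intersection $1$-manifolds and verifying that they assemble into a coboundary of the intersection cocycle --- are not carried out. This is not routine bookkeeping: a contractible open manifold need not be simply connected at infinity and has no sphere boundary, so the argument cannot literally imitate the classical van Kampen count; \cite{bkk} replace it by a careful \v{C}ech/compactly-supported cohomology argument at infinity, together with general-position and perturbation statements for proper maps of cones that preserve properness and expansion. (Minor symptoms of the gap: a ``closed $1$-manifold'' has empty boundary --- you mean a compact $1$-manifold with boundary --- and the assertion about where those boundary points lie is exactly what needs proof.) As it stands, the proposal reduces the theorem to an unproved statement equivalent to it, so it cannot be accepted as a proof.
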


Finally, we need the following product lemma for obstructor dimension, which follows immediately from the Join Lemma in \cite{bkk}.

\begin{lemma}\label{product}
Suppose $X$ is a proper cocompact contractible $\gG$-complex and $f: \text{Cone}(J) \times \text{Cone}(K) \rightarrow X$ a proper expanding map. If $J$ is an $n$-obstructor and $K$ is an $m$-obstructor, then $\obdim(\gG) \ge n+m+2$.
\end{lemma}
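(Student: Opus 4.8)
The plan is to reduce this statement to the Join Lemma of \cite{bkk} by the standard identification of cones on joins with products of cones. First I would recall that for simplicial complexes $J$ and $K$, the join $J * K$ has a natural homeomorphism $\mathrm{Cone}(J * K) \cong \mathrm{Cone}(J) \times \mathrm{Cone}(K)$, under which the cone on a simplex $\sigma * \tau$ (with $\sigma \in J$, $\tau \in K$) corresponds to $\mathrm{Cone}(\sigma) \times \mathrm{Cone}(\tau)$; the metrics differ by a bounded multiplicative constant, so divergence of images is preserved in either direction. Thus the given map $f : \mathrm{Cone}(J) \times \mathrm{Cone}(K) \to X$ may be viewed as a map $\mathrm{Cone}(J * K) \to X$, and I would check that $f$ being proper and expanding on the product translates into $f$ being proper and expanding on $\mathrm{Cone}(J*K)$: a pair of disjoint simplices in $J * K$ either has both constituents in one factor, or has constituents that can be separated by disjointness in $J$ or in $K$, and in each case the divergence hypothesis on $f$ (applied to the appropriate sub-cones, using that $\mathrm{Cone}(\sigma)$ diverges from $\mathrm{Cone}(\sigma')$ inside $\mathrm{Cone}(J)$ when $\sigma \cap \sigma' = \emptyset$) gives the needed divergence of $\mathrm{Cone}(\sigma * \tau)$ from $\mathrm{Cone}(\sigma' * \tau')$.

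Next I would invoke the fact, which is exactly the content of the Join Lemma in \cite{bkk}, that if $J$ is an $n$-obstructor and $K$ is an $m$-obstructor then $J * K$ is an $(n + m + 2)$-obstructor; that is, $\vkt^{\,n+m+4}(J * K) \ne 0$, coming from the cup-product (join) of the classes $\vkt^{\,n+2}(J)$ and $\vkt^{\,m+2}(K)$ in the respective simplicial configuration spaces. Here one must be careful with the indexing convention: an $n$-obstructor in the sense used before Theorem \ref{bkk} is a complex $K$ with $\vkt^{\,n}(K) \ne 0$, while $\obdim$ counts the dimension of the obstructor plus $2$; I would make sure the arithmetic matches so that an $n$-obstructor joined with an $m$-obstructor yields an $(n+m+2)$-obstructor and hence contributes $n+m+4 = (n+2)+(m+2)$ to the obstructor dimension — wait, this needs reconciling with the stated bound $\obdim(\gG) \ge n+m+2$, so I would re-examine the convention: since $\obdim$ already adds $2$ once, and an $n$-obstructor contributes $n+2$, the join of an $n$- and $m$-obstructor contributes $n+m+2$ to $\obdim$, consistent with the statement. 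With the obstructor identified and the proper expanding map $\mathrm{Cone}(J*K) \to X$ in hand, the definition of $\obdim(\gG)$ immediately gives $\obdim(\gG) \ge n+m+2$.

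The main obstacle I anticipate is purely bookkeeping rather than conceptual: keeping the two indexing conventions (the ``$n$-obstructor'' index versus the ``$+2$'' in obstructor dimension) straight, and verifying carefully that divergence in the product metric on $\mathrm{Cone}(J) \times \mathrm{Cone}(K)$ is equivalent to divergence in the path metric on $\mathrm{Cone}(J*K)$ for all relevant pairs of sub-cones — in particular handling the ``mixed'' disjoint pairs $\sigma * \tau$, $\sigma' * \tau'$ where, say, $\sigma \cap \sigma' = \emptyset$ but $\tau, \tau'$ need not be disjoint, where one uses that a single diverging factor forces the product sub-cones to diverge. Once these compatibility checks are in place, the lemma follows formally from the Join Lemma, which is why it is stated as following ``immediately.''
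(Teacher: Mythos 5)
Your overall route is exactly the paper's: the paper offers no argument beyond saying the lemma ``follows immediately from the Join Lemma in \cite{bkk}'', and your reduction via the identification $\mathrm{Cone}(J*K)\cong \mathrm{Cone}(J)\times\mathrm{Cone}(K)$ (with the check that proper/expanding transfers, using that disjointness of $\sigma*\tau$ and $\sigma'*\tau'$ forces disjointness in \emph{both} factors) is the right way to make that ``immediately'' precise. The one genuine problem is your final index ``reconciliation'', which is backwards. With the paper's conventions ($K$ is an $n$-obstructor iff $\vkt^{n}(K)\ne 0$, and a coned $n$-obstructor contributes $n+2$ to $\obdim$), the Join Lemma says $J*K$ is an $(n+m+2)$-obstructor, i.e.\ $\vkt^{\,n+m+2}(J*K)\ne 0$ (not $\vkt^{\,n+m+4}$, and the relevant classes are $\vkt^{n}(J)$ and $\vkt^{m}(K)$, not $\vkt^{n+2}$, $\vkt^{m+2}$); hence the proper expanding map $\mathrm{Cone}(J*K)\to X$ gives $\obdim(\gG)\ge (n+m+2)+2=n+m+4$. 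Your first computation was therefore correct, and you should not have talked yourself into ``the join contributes $n+m+2$ to $\obdim$'' to match the displayed bound: the stated inequality $\obdim(\gG)\ge n+m+2$ is simply the weaker consequence of $n+m+4$, so your argument still proves the lemma as written, but the reconciling sentence as reasoning is false. Note also that the stronger count $n+m+4$ is the one actually needed downstream: in Corollary \ref{example}, $J$ is a $(\dim X_\infty-2)$-obstructor and $K$ is a $(2\dim\prod_{\nu_p}X_{\nu_p}-2)$-obstructor, and only $n+m+4=\dim X_\infty+2\dim\prod_{\nu_p}X_{\nu_p}$ yields the claimed bound, so trust the arithmetic, not the displayed ``$n+m+2$''.
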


\section{Octahedralizations}\label{octa}
In this section, we recall the definiition of a certain simplicial complex with nontrivial van Kampen obstruction. This complex was used in \cite{ados} to give lower bounds for the action dimension of right-angled Artin groups. We will show this complex also provides lower bounds for the action dimension of groups acting properly and cocompactly on Euclidean buildings.

Given a finite set $V$, let $\gD(V)$ denote the full simplex on $V$ and let $O(V)$ denote the boundary complex of the octahedron on $V$.
In other words, $O(V)$ is the simplicial complex with vertex set $V\times \{\pm 1\}$ such that a subset $\{(v_0, \geps_0),\dots, (v_k, \geps_k)\}$ of $V\times \{\pm1\}$ spans a $k$-simplex if and only if its first coordinates $v_0,\dots, v_k$ are distinct.
Projection onto the first factor $V\times \{\pm1\}\to V$ induces a simplicial projection $p:O(V)\to \gD(V)$.
We will denote the vertex $(v,+1)$ or $(v,-1)$ by $v^+$ and $v$ respectively, and the simplex $(\sigma,1)$ or $(\sigma,-1)$ by $\sigma^+$ and $\sigma$ respectively, 

Any finite simplicial complex $L$ with vertex set $V$ is a subcomplex of $\gD(V)$.
The \emph{octahedralization} $\OL$ of $L$ is the inverse image of $L$ in $O(V)$:
\[
	\OL:= p\minus (L) \subset O(V).
\]
We also will say that $\OL$ is the result of ``doubling the vertices of $L$''. In particular, an $n$-simplex in $L$ becomes an $n$-octahedron (the $n$-fold join of two points) in $\OL$, and inclusions of simplices induce inclusions of octahedra in a canonical way. We will usually assume that $L$ is a flag complex, which means that if the $1$-skeleton of a simplex is in $L$, then the entire simplex is in $L$.

Fix a simplex $\Delta$ in $L$, and let $D_\Delta(L)$ denote the full subcomplex of $\OL$ containing $L$ and $\Delta^+$. We say $D_\Delta(L)$ is $L$ doubled over $\Delta$.

In \cite{ados}, the van Kampen obstruction of $D_\Delta(L)$ was calculated.

\begin{theorem}\label{ados}
Let $L$ be a $k$-dimensional flag simplicial complex.  We have $\vkt^{2k}(D_\Delta(L)) \ne 0$ if and only if $H_k(L, \mathbb{Z}_2) \ne 0$.
\end{theorem}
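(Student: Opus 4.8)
The plan is to compute $H^{2k}(\mathcal{C}(D_\Delta(L)), \mathbb{Z}_2)$ — or at least the pairing of $\vkt^{2k}$ against a suitable $2k$-cycle — by relating $D_\Delta(L)$ to the octahedralization $O(L)$ and exploiting the join structure of octahedra. First I would recall that $D_\Delta(L)$ contains, for the top-dimensional situation, a natural $2k$-cycle in its simplicial configuration space built from the doubled vertices of $\Delta$: if $\Delta$ has vertices $\{v_0,\dots,v_k\}$, the two ``parallel'' copies of a $k$-simplex in the $k$-octahedron $p^{-1}(\Delta)$ — namely $\{v_0^{\epsilon_0},\dots,v_k^{\epsilon_k}\}$ and its antipode $\{v_0^{-\epsilon_0},\dots,v_k^{-\epsilon_k}\}$ — are disjoint, and the collection of all such antipodal pairs forms a $2k$-dimensional $\mathbb{Z}_2$-cycle $z_\Delta$ (this is the standard fact that the $k$-fold join of $S^0$'s has a nonzero $\vkt^{2k}$, the cycle being the "diagonal-avoiding" fundamental class of $\mathcal{C}(S^0 * \cdots * S^0)$). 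By drawing $\Delta^+ \cup \Delta^-$ (the full $k$-octahedron) in general position in $\mathbb{R}^{2k}$ and counting, $\vkt^{2k}$ evaluates to $1$ on $z_\Delta$, so $\vkt^{2k}(D_\Delta(L)) \ne 0$ whenever this cycle survives; the content is to show $z_\Delta$ represents a nonzero homology class in $\mathcal{C}(D_\Delta(L))$ precisely when $H_k(L,\mathbb{Z}_2)\ne 0$, and that it generates the relevant part of $H_{2k}(\mathcal{C}(D_\Delta(L)),\mathbb{Z}_2)$.

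For the "if" direction, suppose $H_k(L,\mathbb{Z}_2)\ne 0$ and let $c = \sum_i \Delta_i$ be a $k$-cycle. I would build a $2k$-cycle in $\mathcal{C}(D_\Delta(L))$ by taking, over each top simplex $\Delta_i$ of $c$, the "double" configuration using the doubled vertices of $\Delta$ that lie in $\Delta_i$ together with the undoubled remaining vertices of $\Delta_i$ — more precisely one forms, in the subcomplex $D_\Delta(L)$, pairs $\{\sigma,\tau\}$ of disjoint simplices of total dimension $2k$ coming from splitting each $\Delta_i$ along the $\Delta$-direction. The key point is that the boundary of this chain in $\mathcal{C}$ is governed by $\partial c$ together with the faces where two copies cease to be disjoint; the first vanishes because $c$ is a cycle, and the second cancels by the antipodal symmetry of the octahedron. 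Then one checks $\vkt^{2k}$ pairs nontrivially with this cycle by the local general-position count in a single $2k$-octahedron $p^{-1}(\Delta_i)$, which is the $K_{3,3}$-type calculation done one octahedron at a time. Conversely, for the "only if" direction I would show that if $H_k(L,\mathbb{Z}_2) = 0$ then every $2k$-cycle in $\mathcal{C}(D_\Delta(L))$ is a boundary: decompose $\mathcal{C}(D_\Delta(L))$ using the projection $p$ and the fact that $D_\Delta(L)$ differs from $L$ only over $\Delta$, set up a Mayer–Vietoris or filtration argument splitting configurations by how many of their simplices meet the doubled region, and use acyclicity of $L$ in degree $k$ to kill the top homology. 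One can alternatively phrase this as: $\mathcal{C}(D_\Delta(L))$ deformation retracts onto a subcomplex whose top homology is a functor of $H_k(L)$, so that $\vkt^{2k}$ is forced to vanish.

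The main obstacle I anticipate is the "only if" direction: controlling $H_{2k}(\mathcal{C}(D_\Delta(L)),\mathbb{Z}_2)$ in terms of $H_k(L,\mathbb{Z}_2)$ requires a genuine understanding of the configuration space of a partially-doubled complex, not just a clever cycle. The cleanest route is probably a spectral-sequence or discrete-Morse argument organized by the projection $p:D_\Delta(L)\to L$, where over the interior of $\Delta$ the fiber contributes the $\vkt^{2k}$ of a single octahedron and over the rest of $L$ nothing new happens in top degree; making the bookkeeping of "which simplices are disjoint" interact correctly with this fibration is the delicate part. I would also need the flag-complex hypothesis precisely here, to ensure that disjointness of simplices in $D_\Delta(L)$ is detected at the vertex level and that the retractions respect the simplicial structure. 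The "if" direction, by contrast, should be a fairly direct construction plus a one-octahedron intersection count, essentially the argument already sketched for the $n$-fold join of three points in the excerpt's examples, globalized over a $k$-cycle of $L$.
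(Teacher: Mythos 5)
The paper itself gives no proof of Theorem \ref{ados}; it is quoted from \cite{ados}, so your plan has to be judged against that argument or on its own terms, and either way the ``if'' direction contains a genuine error at its central step. The chain $z_\Delta$ of antipodal pairs of $k$-simplices in the single octahedron $p^{-1}(\Delta)$ is not a $2k$-cycle (each boundary term $\{\text{facet of }\sigma^{\epsilon},\sigma^{-\epsilon}\}$ occurs exactly once, so nothing cancels), and, more fundamentally, no cycle carried by $\mathcal{C}(p^{-1}(\Delta))$ can detect $\vkt^{2k}(D_\Delta(L))$: $p^{-1}(\Delta)=O(\Delta)$ is the octahedral $k$-sphere, which embeds in $\mathbb{R}^{k+1}\subset\mathbb{R}^{2k}$ for $k\ge 1$, so its own mod-$2$ van Kampen class vanishes, and by naturality the restriction of $\vkt^{2k}(D_\Delta(L))$ to $\mathcal{C}(O(\Delta))$ is zero; hence the pairing with any such cycle is $0$, not $1$. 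Your ``standard fact'' conflates joins of two points with joins of three points: iterated joins of $S^0$ are spheres and are not obstructors in twice their dimension, whereas the Flores/Bestvina--Kapovich--Kleiner obstructors recalled in the paper are joins of \emph{three} points. This is not a cosmetic slip: the whole content of the theorem is that the obstruction arises from the interaction of the doubled simplex with a mod-$2$ $k$-cycle of $L$ passing through $\Delta$ --- in the paper's own figure the doubled edge alone is just a square (planar), and only together with the rest of the cycle does it become $K_{3,3}$. Indeed the nonvanishing depends on where $\Delta$ sits relative to the homology (e.g.\ if $\Delta$ is a $2$-simplex disjoint from an octahedral $2$-sphere component of $L$, then $D_\Delta(L)$ is two disjoint $2$-spheres and embeds in $\mathbb{R}^{4}$ even though $H_2(L;\mathbb{Z}_2)\neq 0$; in the paper's application $\Delta$ is a chamber lying on the apartment that carries the cycle), so any argument that is local to the octahedron over $\Delta$ cannot possibly work.

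The globalization you sketch does not repair this: simplices $\Delta_i\neq\Delta$ in the support of $c$ typically contain no doubled vertices, so ``splitting $\Delta_i$ along the $\Delta$-direction'' is undefined, and the asserted cancellation of boundary terms ``by antipodal symmetry'' is exactly where the real work lies. The correct $2k$-cycle must mix pairs of simplices lying over different (and over-lapping) simplices of the cycle together with the lifts over $\Delta$, and the evaluation of the intersection cocycle on it is a single global parity count, not a per-octahedron count. The ``only if'' direction is likewise only a list of hoped-for tools (Mayer--Vietoris, filtration, discrete Morse, a retraction) rather than an argument, and the sufficient condition you aim for, $H_{2k}(\mathcal{C}(D_\Delta(L));\mathbb{Z}_2)=0$, is stronger than what vanishing of $\vkt^{2k}$ requires and is neither established nor obviously true; the natural route when $H_k(L;\mathbb{Z}_2)=0$ is to produce an explicit map (or embedding) with even intersection counts. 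As it stands, neither direction of your proposal constitutes a proof.
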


\begin{figure}
\begin{center}
		\begin{tikzpicture}[scale = 1]

\node[draw,minimum size=3cm,thick,black,regular polygon,regular polygon sides=8] (a) {};
\node at (0,-1.1) {$\Delta$};
\node at (.1,-3.25) {$\Delta^+$};
 		\foreach \x in {1,2,...,8}{
  \fill (a.corner \x) circle[radius=2pt];	
  
  \draw[ black] (-.5,-3) to (.5,-3);
    \fill (-.5,-3) circle[radius=2pt];
     \fill (.5,-3) circle[radius=2pt];
  
     \draw[ black] (.5,-3) to (a.corner 5);
             \draw[ black] (.5,-3) to (a.corner 7);
       \draw[ black] (-.5,-3) to (a.corner 4);
         \draw[ black] (-.5,-3) to (a.corner 6);

  }

\end{tikzpicture}
\caption{If $L$ is an $n$-cycle, then $D_\Delta(L)$ is a subdivided $K_{3,3}$.}
\end{center}

\end{figure}
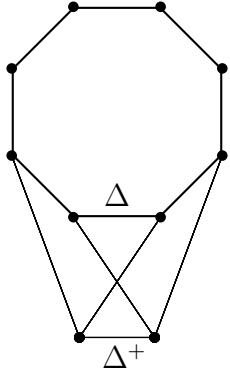

\section{Spherical Buildings}\label{sb}

We will only give a brief description of buildings, the reader is encouraged to refer to \cite{ab07} for complete details. 

\begin{definition}
A \emph{Coxeter group} $(W,S)$ is a group generated by involutions $s_i \in S$, with the only other relations being that every pair of elements $s_is_j$ generates a dihedral group (perhaps $D_\infty$). In other words, $(W,S)$ has a presentation $$<s_1, s_2 , \dots, s_n | (s_is_j)^{m_{ij}} = 1 >$$ for $m_{ij} \in \mathbb{N} \cup \infty$ such that $m_{ii} = 1$.
\end{definition}

For example, proper cocompact reflection groups acting on $\mathbb{S}^n, \mathbb{E}^n$ and $\mathbb{H}^n$ are all examples of Coxeter groups. We will assume from now on that $|S| < \infty$. 

A \emph{mirror structure over $S$} on a space $X$ is a family of subspaces $\{X_s\}_{s\in S}$ indexed by $S$. If $(W,S)$ is a Coxeter system and $X$ has a mirror structure over $S$, let $S(x):=\{s\in S\mid x\in X_s\}$. Now, define an equivalence relation $\sim$ on $W\times X$ by $(w,x)\sim (w',y)$ if and only if $x=y$ and $w^{-1}w'\in W_{S(x)}$. Let $\mathcal{U}(W,X)$ denote the quotient space: $$\mathcal{U}(W,X)=(W\times X)/ \sim.$$
$\mathcal{U}(W,X)$ is called the \emph{basic construction}. There is a natural $W$-action on $W\times X$ which respects the equivalence relation, and hence descends to an action on $\mathcal{U}(W,X)$. One can think of  $\mathcal{U}(W,X)$ as pasting copies of $X$ with the exact gluing given by the Coxeter group.

\begin{definition}
Let $(W,S)$ be a Coxeter group, and let $\Delta$ be a simplex of dimension $|S| - 1$. We can label the codimension-one faces of $\Delta$ by the elements of $S$. The set of codimension-one faces is a mirror structure on $\Delta$. The space $\mathcal{U}(W,S)$ is called the Coxeter complex of $(W,S)$. If $W$ is finite, $\Phi_W$ is homeomorphic to a sphere, and if $W$ is infinite, $\Phi_W$ is contractible. 
\end{definition}

 \begin{definition}A \emph{building} is a simplicial complex with a distinguished set of subcomplexes called \emph{apartments}. This collection of apartments satisfies the following axioms: 
 
 \begin{itemize}
 \item Each apartment is isomorphic to a Coxeter complex.
 \item For any two simplices, there is an apartment containing both of them.
 \item If two apartments contain simplices $\sigma$ and $\tau$, then there is an isomorphism between these apartments fixing $\sigma \cap \tau$ pointwise.
 \end{itemize} 
 \end{definition}
 
The top dimensional simplices of a building are called \emph{chambers}. A building is \emph{thick} if each codimension-one face is contained in at least $3$ chambers. 
 
If $W$ is finite, then the building is called \emph{spherical}, since each apartment is homeomorphic to a sphere. In a spherical Coxeter complex, there is a well-defined notion of \emph{opposite} chambers. Note that every chamber $\sigma$ corresponds to a Coxeter group element $w_\sigma$. Two chambers $\sigma$ and $\tau$ are opposite if they correspond to Coxeter group elements $w_\sigma, w_\tau$ such that $w_\sigma = w_0w_\tau$, where $w_0$ is the longest element of $W$ with respect to the standard generating set $S$. Opposition naturally extends to spherical buildings; in fact, two chambers $\sigma$ and $\tau$ are opposite in one apartment if and only if they are opposite in each apartment containing them both. 

It is important that two chambers in an apartment are opposite if and only if they are on opposite sides of every wall in that apartment, where a wall is a fixed point set inside an apartment of a reflection in $W$. One can analagously define opposition of simplices: $\alpha$ and $\beta$ are opposite in an apartment if they are contained in the same walls and separated by every wall that does not contain them both, and opposite in a building if they are opposite in each apartment which contains them.

 A spherical building is called \emph{right-angled} if all apartments are isomorphic to an $n$-octahedron, which is the Coxeter complex of the right-angled Coxeter group $W \cong \mathbb{Z}_2^{n+1}$.
In an octahedron, the vertices $v$ and $v^+$ are opposite. More generally, two simplices $\sigma$ and $\tau$ in an octahedron are opposite if and only if $p(\sigma) = p(\tau)$ and $\sigma \cap \tau = \emptyset$.

\begin{example}
Let $\mathcal{P}$ denote the poset of subspaces of $\mathbb{R}^n$. Let Flag($\mathcal{P}$) denote the associated flag complex, i.e.  the vertices of Flag($\mathcal{P}$) consist of elements of $\mathcal{P}$ and simplices of Flag($\mathcal{P}$) correspond to chains of inclusions between these subspaces. Flag($\mathcal{P}$) has a natural structure of a spherical building, where apartments correspond to choosing a basis of $\mathbb{R}^n$ and then considering all subspaces generated by proper subsets of these basis vectors.  In this case, the Coxeter group is the symmetric group $S_n$, which acts by permuting the basis vectors. The Coxeter complex can be identified with the barycentric subdivision of the $(n-2)$-simplex. A chamber  is a maximal chain $[v_0] \subset [v_0,v_1] \subset \dots \subset [v_0, v_1, \dots, v_{n-2}]$. Two chambers $C$ and $C'$ are opposite in this building precisely when  any two subspaces in $C$ and $C'$ respectively are in general position. 
\end{example}

\subsection{Bending homeomorphisms}

We will now give our main construction. Essentially, we will exhibit obstructors in a spherical building $\mathcal{B}$ by embedding $D_\Delta(\Opp)$ into $\mathcal{B}$ and applying Theorem \ref{ados}.

\begin{definition}

Let $\mathcal{B}$ be an $n$-dimensional spherical building. Let $A$ be an apartment in $\mathcal{B}$, fix a chamber $C$ in $A$, and let $\{H_1, H_2, ... , H_{n+1}\}$ be the set of walls of $A$ that intersect $C$ in a codimension-one face. If $O^n$ is an $n$-octahedron, then analogously we fix a $n$-simplex $C'$ in 
$O^n$ and define a set of walls $\{H_1', H_2', ... , H_{n+1}'\}$ that intersect $C'$ in a codimension-one face. Then there is an obvious "bending" homeomorphism $f: O^n \rightarrow A$ which maps each wall $H_i' \rightarrow H_i$, maps $C'$ to $C$, and respects all intersections of walls.
\end{definition}

Now, fix a chamber $\Delta^+ \in \mathcal{B}$, and let $\Opp(\Delta^+)$ be the simplicial complex of opposite simplices.  
For each chamber $\sigma \in \Opp(\Delta^+)$, there is a unique apartment $A_\sigma$ containing $\Delta^+$ and $\sigma$. Given an octahedron $O(\sigma)$ which is doubled over $\sigma$, we have a bending homeomorphism $f_\sigma: O(\sigma) \rightarrow A_\sigma$ which takes $\sigma$ to $\sigma$ and $\sigma^+$ to $\Delta^+$. 

\begin{lemma}\label{map}
Suppose $\alpha \subset \sigma$ and let $f_\sigma: O(\sigma) \rightarrow A_\sigma$ be the bending homeomorphism. Then $f_\sigma$ sends $O(\alpha)$ to the convex hull of $\alpha$ and the opposite simplex of $\alpha$ in $\Delta^+$.
\end{lemma}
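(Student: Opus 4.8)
The plan is to trace through the bending homeomorphism $f_\sigma$ simplex by simplex, keeping careful track of the two pieces of $O(\alpha)$: the copy $\alpha$ sitting in $\Opp(\Delta^+)$ (coming from the $(-1)$ vertices) and the copy $\alpha^+$ sitting in $\Delta^+$ (coming from the $(+1)$ vertices), together with the octahedra spanning between them. Recall $O(\alpha)$ is the full subcomplex of the octahedron $O(\sigma)$ on the vertex set of $\alpha$ doubled, i.e. on $\{v, v^+ : v \in \alpha\}$, and $D_\sigma(O(\sigma))$ realizes $\sigma^+$ as the ``positive'' face opposite $\sigma$. Since $f_\sigma$ is a simplicial isomorphism from the octahedron $O(\sigma)$ to the apartment $A_\sigma$ sending $\sigma \mapsto \sigma$ and $\sigma^+ \mapsto \Delta^+$ and respecting walls and their intersections, it suffices to identify the image of each vertex of $O(\alpha)$ and invoke that $f_\sigma$ carries joins to convex hulls inside the apartment.

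First I would set up coordinates in the apartment $A_\sigma$. Since $A_\sigma$ is a spherical Coxeter complex containing the opposite chambers $\sigma$ and $\Delta^+$, every wall of $A_\sigma$ through a codimension-one face of $\sigma$ separates $\sigma$ from $\Delta^+$; label these walls $H_1, \dots, H_{n+1}$ as in the definition, and let $H_i' \subset O(\sigma)$ be the corresponding coordinate walls of the octahedron, so $f_\sigma(H_i') = H_i$. In the octahedron, a vertex $v$ of $\sigma$ lies on all the $H_j'$ except one, say $H_{i(v)}'$; its double $v^+$ lies on exactly the same walls $\{H_j' : j \ne i(v)\}$ but on the opposite side of $H_{i(v)}'$ — this is precisely the statement that $v$ and $v^+$ are opposite vertices of the octahedron. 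Applying $f_\sigma$: the image $f_\sigma(v)$ is the vertex of $\sigma$ on all walls $H_j$ with $j \ne i(v)$, and $f_\sigma(v^+)$ is the vertex contained in the same walls $\{H_j : j\ne i(v)\}$ but separated from $f_\sigma(v)$ by $H_{i(v)}$. Since $\sigma$ and $\Delta^+$ are opposite, $f_\sigma(v^+)$ is the vertex of $\Delta^+$ opposite to $f_\sigma(v) \in \sigma$ inside $A_\sigma$; as $v$ ranges over $\alpha$, these $f_\sigma(v^+)$ span exactly the face of $\Delta^+$ opposite to $\alpha$. (Here I would cite the characterization, recalled in the paper, that opposition in an apartment is being separated by every wall not containing both simplices, together with the matching of walls $H_i' \leftrightarrow H_i$.)

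Having identified vertices, the conclusion follows formally: $O(\alpha)$ is the full subcomplex of $O(\sigma)$ on $\{v, v^+ : v\in\alpha\}$, so $f_\sigma(O(\alpha))$ is the full subcomplex of $A_\sigma$ on $f_\sigma(\alpha) \cup f_\sigma(\alpha^+) = \alpha \cup \beta$ where $\beta$ is the opposite face of $\alpha$ in $\Delta^+$; since $A_\sigma$ is a Coxeter complex and $\alpha$, $\beta$ are opposite simplices spanning an apartment-convex subcomplex, this full subcomplex is precisely the convex hull of $\alpha$ and $\beta$. The main obstacle I anticipate is purely bookkeeping: verifying that the wall-indexing of $O(\sigma)$ restricts correctly to $O(\alpha)$ — i.e. that the subset of walls through $\alpha$ and through its double $\alpha^+$ match up under $f_\sigma$ with the walls through $\alpha$ and through $\beta$ in $A_\sigma$ — so that "full subcomplex on opposite vertices" really does coincide with the convex hull rather than something larger. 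This is where one genuinely uses thickness-free, purely Coxeter-combinatorial facts about apartments, and I would treat it carefully rather than by a picture.
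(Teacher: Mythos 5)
Your vertex computation (that $f_\sigma$ sends $v^+$ to the vertex of $\Delta^+$ opposite $f_\sigma(v)$, because $v^+$ is the point of $\sigma^+$ lying on all walls through $v$) is correct, but the step that gets you from vertices to the whole complex contains a genuine error. You assert that $f_\sigma$ is a \emph{simplicial isomorphism} from the octahedron $O(\sigma)$ onto the apartment $A_\sigma$, and deduce that $f_\sigma(O(\alpha))$ is the full subcomplex of $A_\sigma$ on the image vertex set $\alpha \cup \beta$. This is false unless the building is right-angled: $A_\sigma$ is the Coxeter complex of $(W,S)$, not of $(\mathbb{Z}_2)^{n+1}$, so it generally has many more chambers than $O(\sigma)$ (for an $A_2$-apartment, $O(\sigma)$ is a $4$-cycle and $A_\sigma$ a $6$-cycle). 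The bending map only matches walls and their intersections; each chamber of $O(\sigma)$ maps onto a \emph{union} of chambers of $A_\sigma$ (the paper uses exactly this later, describing $F(\alpha)$ as the union of chambers with $\In(w^{-1}) = V(\alpha)$). Moreover, even the final identification is wrong on its own terms: the full subcomplex of $A_\sigma$ spanned by the vertices of two opposite simplices $\alpha$ and $\beta$ is typically just $\alpha \sqcup \beta$ (opposite simplices share no adjacent vertices), whereas the convex hull can be the entire apartment, e.g.\ when $\alpha = \sigma$ and $\beta = \Delta^+$. So the image of $O(\alpha)$ is simply not determined by the images of its vertices the way you claim.

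The repair is to work with walls rather than vertices, which is the paper's route: $O(\alpha)$ is exactly the intersection of all walls of $O(\sigma)$ that contain $\alpha$ (the $v$-walls for $v \notin \alpha$); since $f_\sigma$ carries walls to walls and respects their intersections, $f_\sigma(O(\alpha))$ is the intersection of all walls of $A_\sigma$ containing $\alpha$, and $f_\sigma(\alpha^+)$ is the face of $\Delta^+$ opposite $\alpha$. One then identifies this intersection with the convex hull: the convex hull of two simplices in an apartment is the intersection of all roots containing both, and for opposite simplices the roots containing both are precisely those bounded by walls containing $\alpha$ (any other wall separates them), so that intersection of roots is the intersection of walls containing $\alpha$. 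Your wall bookkeeping at the vertex level can be kept as a sanity check, but the convex-hull identification must go through roots and walls, not through spans of vertices.
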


\begin{proof}

Note that $O(\alpha)$ is the intersection of all walls in $O(\sigma)$ which contain $\alpha$.  
Since $f_\sigma$ preserves the intersection of walls, we have that $f_\sigma$ sends $O(\alpha)$ to the intersection of all walls which contain $f(\alpha)$, and that $f(\alpha^+)$ is opposite to $f(\alpha)$. 
The convex hull in $A_\sigma$ of two simplices is the intersection of all roots which contain them both. In this case, this is the intersection of all walls which contain $f(\alpha)$.
\end{proof}

\begin{lemma}
The collection of bending homeomorphisms $f_\sigma$ extends to a map $F: O(\Opp(\Delta^+)) \rightarrow \mathcal{B}$.
\end{lemma}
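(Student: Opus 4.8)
The plan is to show that the individual bending homeomorphisms $f_\sigma$ agree on overlaps, so that they glue to a well-defined simplicial map $F$ on the union $O(\Opp(\Delta^+)) = \bigcup_\sigma O(\sigma)$. The first step is to describe the overlaps combinatorially: if $\sigma$ and $\sigma'$ are chambers of $\Opp(\Delta^+)$, then in the octahedralization $O(\Opp(\Delta^+))$ the octahedra $O(\sigma)$ and $O(\sigma')$ meet exactly in $O(\sigma \cap \sigma')$ — this is because octahedralization commutes with taking full subcomplexes, and $\sigma \cap \sigma'$ is itself a simplex of $\Opp(\Delta^+)$ (the complex of opposite simplices is a full subcomplex of $\mathcal{B}$, and any face of an opposite simplex is opposite to the corresponding face of $\Delta^+$). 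So it suffices to check that for $\alpha = \sigma \cap \sigma'$, the restrictions $f_\sigma|_{O(\alpha)}$ and $f_{\sigma'}|_{O(\alpha)}$ coincide as maps into $\mathcal{B}$.

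The key tool for this is Lemma \ref{map}: both $f_\sigma$ and $f_{\sigma'}$ send $O(\alpha)$ onto the same target, namely the convex hull in the respective apartment of $\alpha$ and the simplex opposite to $\alpha$ inside $\Delta^+$. I would first note that this target does not depend on the apartment: $\alpha$ has a \emph{unique} opposite simplex $\alpha^{op}$ inside the fixed chamber $\Delta^+$ (the opposite of a face of a chamber, taken within that chamber, is determined by the longest element $w_0$ of the finite Coxeter group, hence canonical), and the convex hull of $\alpha$ and $\alpha^{op}$ — being the intersection of all walls through $\alpha$ in any apartment containing $\alpha$ and $\Delta^+$ — is a fixed subcomplex of $\mathcal{B}$, call it $J(\alpha)$. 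So both maps have the same image complex $J(\alpha)$. To pin down that they are literally the same map and not merely have the same image, I would use the building axiom that supplies an isomorphism $A_\sigma \to A_{\sigma'}$ fixing $A_\sigma \cap A_{\sigma'}$ pointwise, and in particular fixing $\alpha$ and all of $\Delta^+$; since $J(\alpha)$ is determined by $\alpha$ and $\Delta^+$ via wall intersections, this isomorphism restricts to the identity on $J(\alpha)$. Combined with the fact that the bending homeomorphism $f_\sigma$ is uniquely determined on $O(\alpha)$ by where it sends the relevant walls and chambers (it is the canonical simplicial identification of the octahedron-face $O(\alpha)$ with $J(\alpha)$ matching up walls through $\alpha$), the two restrictions agree.

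Assembling: define $F$ on each cell of $O(\Opp(\Delta^+))$ by $F|_{O(\sigma)} = f_\sigma$ for any chamber $\sigma \supseteq$ that cell; the previous paragraph shows this is independent of the choice of $\sigma$, and since every simplex of $O(\Opp(\Delta^+))$ lies in some $O(\sigma)$ (each simplex of $\Opp(\Delta^+)$ is a face of a chamber of $\Opp(\Delta^+)$, because $\Opp(\Delta^+)$ is a chamber subcomplex of the spherical building) the map $F$ is defined everywhere and is simplicial. I expect the main obstacle to be the bookkeeping around the phrase "the bending homeomorphism is uniquely determined": one has to be careful that the combinatorial recipe "match walls through $\alpha$, send $\sigma$ to $\sigma$, send $\sigma^+$ to $\Delta^+$" genuinely forces a unique map on $O(\alpha)$, rather than only up to an automorphism of $J(\alpha)$ fixing $\alpha$ and $\alpha^{op}$ — this is where the rigidity of Coxeter complexes (a chamber and its opposite determine an apartment, and a simplicial automorphism fixing a chamber pointwise is the identity) does the real work, and it should be spelled out carefully.
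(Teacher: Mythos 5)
Your proposal follows essentially the same route as the paper: reduce to showing that $f_\sigma$ and $f_{\sigma'}$ agree on the overlap $O(\sigma)\cap O(\sigma') = O(\sigma\cap\sigma')$, apply Lemma \ref{map} to identify both restrictions with the convex hull of $\sigma\cap\sigma'$ and $(\sigma\cap\sigma')^+$, and use that this hull is independent of the ambient apartment (the paper via convexity of apartments and uniqueness of the hull in $\mathcal{B}$, you via the building axiom and wall intersections). Your extra care in distinguishing ``same image'' from ``same map,'' handled by rigidity of Coxeter complexes, is a reasonable tightening of a step the paper passes over quickly, and the argument is correct.
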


\begin{proof}
It suffices to show that if $\sigma$ and $\tau$ are chambers in $\Opp(\Delta^+)$, then $f_\sigma(O(\sigma \cap \tau)) = f_\tau(O(\sigma \cap \tau))$.  By Lemma \ref{map} we know $f_\sigma$ and $f_\tau$ send $O(\sigma \cap \tau)$ to the convex hull of $\sigma \cap \tau$ and $(\sigma \cap \tau)^+$ in $A_\sigma$ and $A_\tau$, so we must show these intersections coincide. Since each apartment is convex, we have that both convex hulls are the convex hull of $\sigma \cap \tau$ and $(\sigma \cap \tau)^+$ in $\mathcal{B}$. Since this is unique, $f_\sigma$ and $f_\tau$ agree on $O(\sigma \cap \tau)$.
\end{proof}

Now, we will show that $F$ restricts to an embedding on $D_\Delta(\Opp(\Delta^+))$, for some choice of a chamber $\Delta$ in $\Opp(\Delta^+)$. To do this, we need to be more precise about the image of simplices under $F$.

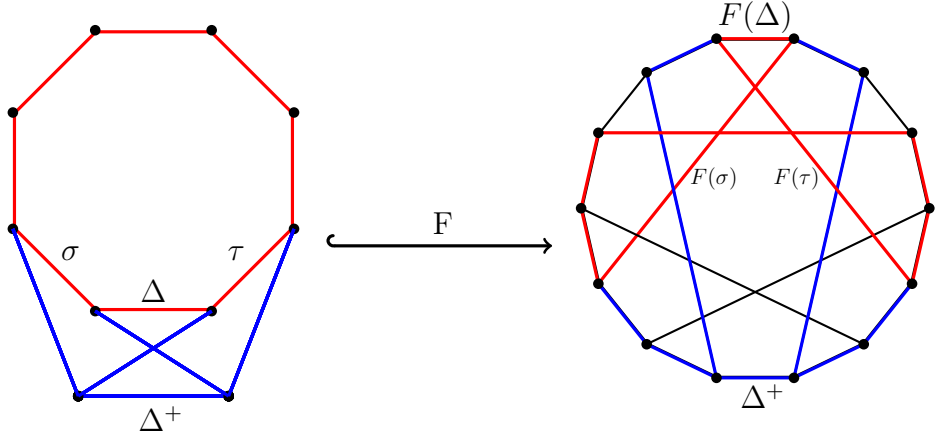
\begin{figure}
\begin{center}
		\begin{tikzpicture}[scale = 1]

 \begin{scope}[shift={(-1,0)}]

\node[draw,minimum size=4cm,very thick,red,regular polygon,regular polygon sides=8] (a) {};

\node at (0,-1.6) {$\Delta$};
\node at (1.1,-1.1) {$\tau$};
\node at (-1.1,-1.1) {$\sigma$};

\node at (.1,-3.3) {$\Delta^+$};
 		\foreach \x in {1,2,...,8}{
  \fill (a.corner \x) circle[radius=2pt];	
  
  \draw[very thick,blue] (-1,-3) to (1,-3);
    \fill (-1,-3) circle[radius=2pt];
     \fill (1,-3) circle[radius=2pt];
       \draw[very thick,blue] (-1,-3) to (a.corner 4);
         \draw[very thick,blue] (-1,-3) to (a.corner 6);
           \draw[very thick,blue] (1,-3) to (a.corner 5);
             \draw[very thick,blue] (1,-3) to (a.corner 7);
  }

\end{scope}

\draw [very thick, right hook->] (1.3,-1) -- (4.3,-1);

\node at (2.85, -.7) {F};
		
 \begin{scope}[shift={(7,-.5)}]
\node[draw,minimum size=4.6cm,thick,regular polygon,regular polygon sides=14] (a) {};

  \draw[very thick,red] (a.corner 1) to (a.corner 6);
 \draw[thick] (a.corner 3) to (a.corner 8);
  \draw[thick] (a.corner 5) to (a.corner 10);
   \draw[thick] (a.corner 7) to (a.corner 12);
    \draw[thick] (a.corner 9) to (a.corner 14);
        \draw[very thick,red] (a.corner 1) to (a.corner 2);
            \draw[very thick,red] (a.corner 4) to (a.corner 5);
                \draw[very thick,red] (a.corner 5) to (a.corner 6);
                    \draw[very thick,red] (a.corner 11) to (a.corner 12);
                    \draw[very thick,red] (a.corner 12) to (a.corner 13);

                     \draw[very thick,blue] (a.corner 14) to (a.corner 1);
                        \draw[very thick,blue] (a.corner 14) to (a.corner 9);
                           \draw[very thick,blue] (a.corner 2) to (a.corner 3);
                              \draw[very thick,blue] (a.corner 3) to (a.corner 8);
                                 \draw[very thick,blue] (a.corner 8) to (a.corner 7);
                                   \draw[very thick,blue] (a.corner 7) to (a.corner 6);
                       \draw[very thick,blue] (a.corner 10) to (a.corner 9);
                                   \draw[very thick,blue] (a.corner 11) to (a.corner 10);
     \draw[very thick,red] (a.corner 11) to (a.corner 2);
       \draw[very thick,red] (a.corner 13) to (a.corner 4);

\draw[very thick,blue] (a.corner 8) to (a.corner 9);
\node at (0,2.55) {$F(\Delta)$};
\node [scale = .7] at (.55,.4) {$F(\tau)$};
\node  [scale = .7] at (-.55,.4) {$F(\sigma)$};

\node at (0.1,-2.5) {$\Delta^+$};
 		\foreach \x in {1,2,...,14}{
  \fill (a.corner \x) circle[radius=2pt];	
  }
	\end{scope}

  \end{tikzpicture}
  \end{center}
 \caption{Embedding $D_\Delta(\Opp(\Delta^+))$ into the flag complex of subspaces of $\mathbb{F}_2^3$. The red cycle is the opposite complex of $\Delta^+$. }
\end{figure}

\begin{definition}
Let $(W,S)$ be a Coxeter group, and let $w \in W$. Then $$\text{In}(w) = \{s \in S | \hspace{.5mm} \ell(ws) < \ell(w)\}$$ where $\ell(w)$ denotes the length of $w$ in $W$ with respect to the generating set $S$.
\end{definition}

 $\text{In}(w)$ is precisely the set of letters with which a reduced expression for $w$ can end. Note that $\text{In}(w^{-1})$ is precisely the elements $s \in S$ such that $\ell(sw) < \ell(w)$, i.e. it consists of $w$ which are separated from the identity chamber by the $s$-wall. 

\begin{definition}
Let $\alpha \in D_\Delta(\Opp(\Delta^+))$ be a simplex. Let $V^+(\alpha)$ denote the vertices of $\alpha$ contained in $\Delta^+$ and $V(\alpha)$ denote the vertices of $\alpha$ contained in $\Opp(\Delta^+)$.
\end{definition}

For a chamber $\sigma$  in $\Opp(\Delta^+)$, identify $O(\sigma)$ as the Coxeter complex of $(\mathbb{Z}_2)^{n+1}$, and identify $\Delta^+$ with the identity chamber. Given a vertex $s \in \sigma$, we say that the $s$-wall is the unique wall in $O(\sigma)$ containing every other vertex in $\sigma$. 

A chamber $\alpha \subset O(\sigma)$ corresponds to an element $w$ with $\text{In}(w^{-1}) = V(\alpha)$, i.e. $w$ is separated from the identity by each $s$-wall for $s \in V(\sigma)$. Since the bending homeomorphism $f$ preserves walls containing $\sigma$ and the chamber $\Delta^+$, we have in general:

\begin{lemma}
For $\sigma \in \Opp(\Delta^+)$ and $\alpha$ a chamber in $O(\sigma)$, $F(\alpha)$ is a union of chambers of $A_\sigma$ corresponding to Coxeter group elements $w$ with $\text{In}(w^{-1}) = V(\alpha)$.
\end{lemma}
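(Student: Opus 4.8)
The plan is to understand precisely which chambers of the apartment $A_\sigma$ constitute the image $F(\alpha)$ of a chamber $\alpha$ in $O(\sigma)$, and to extract this from the wall-preservation property of the bending homeomorphism $f_\sigma$ together with Lemma \ref{map}. First I would recall the identification set up just above the statement: we view $O(\sigma)$ as the Coxeter complex of $(\mathbb{Z}_2)^{n+1}$ with $\Delta^+$ playing the role of the identity chamber, so each chamber of $O(\sigma)$ corresponds to a group element, and $\sigma$ itself — being the chamber of $O(\sigma)$ opposite $\Delta^+$ — corresponds to the longest element $w_0$ of $(\mathbb{Z}_2)^{n+1}$. A chamber $\alpha$ of $O(\sigma)$ with vertex set split as $V^+(\alpha)\sqcup V(\alpha)$ then corresponds to the element $w$ whose defining property is $\text{In}(w^{-1}) = V(\alpha)$; indeed the $s$-walls separating $w$ from the identity chamber $\Delta^+$ are exactly those indexed by $s\in V(\alpha)$, since a vertex $v\in\sigma$ lies in $\alpha$ precisely when $\alpha$ is on the far side of the $v$-wall from $\Delta^+$. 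This is just the dictionary of Coxeter combinatorics for the cube complex $O(\sigma)$.

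Next I would transport this picture across $f_\sigma$ into $A_\sigma$. By construction the bending homeomorphism $f_\sigma$ carries the walls of $O(\sigma)$ through $\sigma$ bijectively onto the walls of $A_\sigma$ through $\sigma$, sends $\Delta^+$ to $\Delta^+$, and respects all intersections of walls — in particular it is wall-preserving in the strong sense that it takes a halfspace (root) of $O(\sigma)$ bounded by such a wall to a root of $A_\sigma$ bounded by the image wall. Hence, if we label the walls of $A_\sigma$ through $\sigma$ by the elements of $S=V(\sigma)$ via this correspondence, then $f_\sigma$ maps the set of chambers of $O(\sigma)$ separated from $\Delta^+$ by exactly the walls $\{\,s\text{-wall}: s\in V(\alpha)\,\}$ onto the set of chambers of $A_\sigma$ separated from $\Delta^+$ by exactly the corresponding walls. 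The only subtlety is that $f_\sigma$ is a homeomorphism of underlying spaces, not a simplicial isomorphism of $O(\sigma)$ onto $A_\sigma$ with its chamber structure — $A_\sigma$ is subdivided much more finely (it is a Coxeter complex for a possibly large finite $W$, not just for $(\mathbb{Z}_2)^{n+1}$), which is exactly why $F(\alpha)$ is a \emph{union} of chambers of $A_\sigma$ rather than a single one. So I would phrase the conclusion as: $F(\alpha)$ equals the union of all chambers $C'$ of $A_\sigma$ whose defining Coxeter element $w$ satisfies $\text{In}(w^{-1}) = V(\alpha)$, i.e. those separated from $\Delta^+$ by precisely the walls indexed by $V(\alpha)$.

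The one point requiring care — and the main obstacle — is checking that $f_\sigma$ really does respect the separation relation, i.e. that ``image of a halfspace is the corresponding halfspace.'' The definition of the bending homeomorphism only explicitly demands that walls go to walls, $C'\mapsto C$, and intersections of walls are preserved. To get halfspaces I would argue that a root in $O(\sigma)$ is determined by the pair (its bounding wall, which side $C'$ is on), and similarly in $A_\sigma$; since $f_\sigma$ sends the bounding wall $H_i'$ to $H_i$, sends $C'$ to a chamber on a definite side of $H_i$, and is a homeomorphism (so it cannot swap the two complementary pieces of $A_\sigma\setminus H_i$ once the image of one point is pinned down), it must carry the root of $O(\sigma)$ containing $C'$ to the root of $A_\sigma$ containing $C$. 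Then a chamber $\alpha$ of $O(\sigma)$, being the intersection of the roots it lies in, maps into the intersection of the corresponding roots of $A_\sigma$, which is precisely the union of chambers with the stated $\text{In}(w^{-1})$; surjectivity onto that union follows because $f_\sigma$ is a homeomorphism and $\overline{O(\sigma)} = \bigcup_\alpha \alpha$ maps onto $\overline{A_\sigma}$, so nothing in that region is missed. Combining with Lemma \ref{map} (which handles the lower-dimensional faces $O(\alpha')$ for $\alpha'\subsetneq\sigma$ and is the $V^+$-analogue of this statement) closes the argument, and the phrase ``we have in general'' in the text signals that this is meant to be the uniform statement subsuming Lemma \ref{map}.
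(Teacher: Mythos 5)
Your proof is correct and takes essentially the same route as the paper, which states the lemma without a separate proof precisely because it follows from the preceding remarks: the dictionary identifying a chamber $\alpha \subset O(\sigma)$ with the element $w$ of $(\mathbb{Z}_2)^{n+1}$ satisfying $\text{In}(w^{-1}) = V(\alpha)$, together with the fact that the bending homeomorphism preserves the $s$-walls and the chamber $\Delta^+$. Your additional care about root (halfspace) preservation and surjectivity onto the intersection of the corresponding roots simply makes explicit what the paper leaves implicit.
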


\begin{lemma}\label{embed}
$F$ restricted to $D_\Delta(\Opp(\Delta^+))$ is an embedding.
\end{lemma}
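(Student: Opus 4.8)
The plan is to show that $F$ restricted to $D_\Delta(\Opp(\Delta^+))$ is injective by carefully tracking the images of simplices. Recall that $D_\Delta(\Opp(\Delta^+))$ is the full subcomplex of $O(\Opp(\Delta^+))$ on the vertex set $\Opp(\Delta^+) \cup \Delta^+$; every simplex $\alpha$ decomposes into its ``lower'' part $V(\alpha) \subset \Opp(\Delta^+)$ and its ``upper'' part $V^+(\alpha) \subset \Delta^+$, with $V(\alpha)$ spanning a simplex $\sigma_\alpha$ of $\Opp(\Delta^+)$. The key point is that by the previous lemma (together with Lemma \ref{map}), we can give an intrinsic description of $F(\alpha)$ purely in terms of the opposition relation in $\mathcal{B}$, not referring to any chosen apartment: $F(\alpha)$ should be the convex hull in the appropriate apartment of the simplex spanned by $V(\alpha)$ together with the simplex of $\Delta^+$ opposite to $V^+(\alpha)$ inside $\Delta^+$. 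So the first step is to pin down this description precisely: for a simplex $\alpha$ with lower part a simplex $\beta \subseteq \sigma$ and upper part contained in $\Delta^+$, $F(\alpha)$ is a union of chambers in $A_\sigma$, and Lemma \ref{map} identifies which ones via the $\text{In}(w^{-1})$ condition.

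Next I would establish injectivity. Suppose $F(\alpha) = F(\alpha')$ for two simplices $\alpha, \alpha'$ of $D_\Delta(\Opp(\Delta^+))$; I want $\alpha = \alpha'$. The image $F(\alpha)$ is a contractible polytope sitting inside $\mathcal{B}$, and I would recover $\alpha$ from it by looking at which vertices of $\mathcal{B}$ lie in $F(\alpha)$ and how they relate to $\Delta^+$. Concretely: the vertices of $F(\alpha)$ that are themselves vertices of $\Delta^+$ are exactly $V^+(\alpha)$ — this uses that the bending homeomorphism $f_\sigma$ sends $\sigma^+$ to $\Delta^+$ and more precisely that $f_\sigma$ restricted to the ``$+$'' part is the canonical identification, so the only vertices of $\Delta^+$ hit by $F(\alpha)$ are those corresponding to $V^+(\alpha)$. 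For the lower part, the vertices of $F(\alpha)$ opposite to (or far from) $\Delta^+$ recover $V(\alpha)$; here the opposition machinery recalled in Section \ref{sb} — two simplices are opposite in $\mathcal{B}$ iff opposite in every common apartment — lets me characterize $V(\alpha)$ without reference to a chosen apartment, namely as the set of vertices of $F(\alpha)$ that are opposite, within $A_{\sigma_\alpha}$, to faces of $\Delta^+$, equivalently the vertices forming the ``outer face'' of the polytope $F(\alpha)$. Once both $V^+(\alpha)$ and $V(\alpha)$ are recovered intrinsically from the set $F(\alpha)$, we get $\alpha = \alpha'$.

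The main obstacle I anticipate is the overlap/compatibility issue: a priori the image $F(\alpha)$ of a simplex $\alpha$ meeting two chambers $\sigma, \tau \in \Opp(\Delta^+)$ could be described via either $A_\sigma$ or $A_\tau$, and I need the intrinsic description to be genuinely chamber-independent so that the recovery procedure is well-defined. This is exactly the content of Lemma \ref{map} and the gluing lemma preceding this statement — they guarantee that $F(O(\beta))$ is the convex hull in $\mathcal{B}$ of $\beta$ and $\beta^+$, which is unique — so the real work is organizing this into a clean bijective correspondence rather than proving anything genuinely new. A secondary subtlety is that $F$ is a simplicial map only after a suitable subdivision (each octahedron maps onto an apartment, i.e., onto a subdivided simplex), so ``embedding'' must be interpreted as a topological embedding of $|D_\Delta(\Opp(\Delta^+))|$; I would note that injectivity on vertices-plus-the-convex-hull-description upgrades to a topological embedding because $F$ is a homeomorphism onto its image on each octahedron $O(\sigma)$ (it is a bending homeomorphism there) and the images of distinct simplices, by the intrinsic characterization, intersect exactly in the image of their common face. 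Putting these together gives that $F|_{D_\Delta(\Opp(\Delta^+))}$ is a continuous injection from a compact space, hence an embedding.
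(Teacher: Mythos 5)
There is a genuine gap, and it sits exactly where the real content of the lemma lies. Your argument, as organized, would show at best that distinct simplices of $D_\Delta(\Opp(\Delta^+))$ have distinct images (``$F(\alpha)=F(\alpha')\Rightarrow\alpha=\alpha'$''). That is strictly weaker than injectivity of $F$ on points: what is needed is that the images of any two simplices meet exactly in the image of their common face, equivalently that disjoint simplices have disjoint images. You assert this final step follows ``by the intrinsic characterization,'' but it cannot be a formal consequence of Lemma \ref{map} and the gluing lemma: those statements apply verbatim to the full octahedralization $O(\Opp(\Delta^+))$, on which $F$ is typically very far from injective --- for instance, any two distinct vertices $v\neq w$ of $\Opp(\Delta^+)$ that are opposite the same vertex of $\Delta^+$ satisfy $F(v^+)=F(w^+)$. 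So the disjointness of images genuinely uses the restriction to $D_\Delta$ (all ``plus'' vertices come from the single chamber $\Delta$, so a chamber $\alpha$ and a disjoint chamber $\beta$ have complementary behaviour at each vertex $s^+$ of $\Delta^+$), and this is precisely where the paper's proof does its work: if $s^+\in\alpha$ then $s^+\notin\beta$, so by the $\text{In}(w^{-1})$ description the chambers of $F(\alpha)$ lie on the $\Delta^+$-side of the $s$-wall while those of $F(\beta)$ lie on the far side, in their respective apartments $A_\sigma$ and $A_\tau$ (which need not share that wall beyond the panel of $\Delta^+$); hence any intersection is forced into the $s$-wall, and iterating over the vertices of $\Delta^+$ contained in $\alpha$ forces $\alpha=\Delta^+$, where disjointness is clear. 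Neither this induction nor any substitute for it appears in your proposal.

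Two secondary points. First, your recovery of $V(\alpha)$ as the ``vertices of $F(\alpha)$ opposite to faces of $\Delta^+$'' or the ``outer face'' is left vague; it is in fact true for chambers, but verifying it requires the same $\text{In}(w^{-1})$ bookkeeping you are trying to avoid (and for the plus-part one must also check that $F(\alpha)$ picks up no extra vertices of $\Delta^+$). Second, even granting both recovery statements, injectivity on the set of simplices does not upgrade to a topological embedding without the face-intersection property above, since the interiors of the images of two distinct simplices, living in different apartments $A_\sigma$ and $A_\tau$ that can share much more than $\Delta^+$, could a priori overlap away from any common face. So your proposal assembles the correct ingredients --- the convex-hull description of $F(O(\beta))$, well-definedness of $F$ across apartments, and the reduction to the behaviour of chambers --- but omits the core argument that makes the lemma true.
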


\begin{proof}
It is obvious that $F$ restricted to each simplex is a homeomorphism. Therefore, it suffices to show that disjoint top dimensional simplices are mapped disjointly under $F$. Note that this is obviously true for $F$ restricted to $\Opp(\Delta^+)$. 
Let $\sigma,\tau$ be chambers in $\Opp(\Delta^+)$, and let $\alpha$ and $\beta$ be disjoint chambers in $O(\sigma) \cap D_\Delta(\Opp(\Delta^+))$ and $O(\tau) \cap D_\Delta(\Opp(\Delta^+)$ respectively.  We must show that $F(\alpha) \cap F(\beta) = \emptyset$. Again, we identify $\Delta^+$ with the identity chamber of $W$. 

If $\alpha$ contains a vertex $s^+$ in $\Delta^+$, the chambers in the image of $F(\alpha)$ correspond to Coxeter group elements $w$ with $s \notin \text{In}(w^{-1})$. On the other hand, since $s^+ \notin \beta$, the chambers in $F(\beta)$ correspond to elements with $s \in \text{In}(w^{-1})$. Therefore, any intersection between $\alpha$ and $\beta$ must occur on the $s$-wall of the respective apartments. 

If $\alpha$ contains no other vertices of $\Delta^+$, then the intersection of the image of $\alpha$ with the $s$-wall is in $\Opp(\Delta^+)$. Therefore, if there is a nontrivial intersection between $\alpha$ and $\beta$, $\alpha$ must contain another vertex in $\Delta^+$. Repeating this argument verbatim would eventually imply that $\alpha = \Delta^+$, which implies $F(\alpha) \cap F(\beta) = \emptyset$. \end{proof}

Now that we have embedded $D_\Delta(\Opp(\Delta^+))$ into our spherical building, we would like to verify the assumptions of Theorem \ref{ados}. Therefore, we need to show that $\Opp(\Delta^+)$ is a flag complex with top dimensional $\mathbb{Z}_2$-homology. 
The first statement follows straight from the definitions, we record it as a lemma.

\begin{lemma}\label{flag}
For any chamber $C$ in $\mathcal{B}$, $\Opp(C)$ is a flag complex.
\end{lemma}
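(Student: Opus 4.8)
The plan is to argue directly from the definition of opposition in a spherical building together with the structure of apartments. Recall that a simplicial complex is flag precisely when every finite set of pairwise-adjacent vertices (i.e. every clique in the $1$-skeleton) spans a simplex. So let $C$ be a fixed chamber of $\mathcal{B}$, and let $v_0, v_1, \dots, v_k$ be vertices of $\Opp(C)$ which are pairwise joined by edges in $\Opp(C)$. Since $\mathcal{B}$ is itself a flag complex (buildings, being chamber complexes built from Coxeter complexes, have this property), the $v_i$ already span a simplex $\alpha$ of $\mathcal{B}$; the only thing to check is that $\alpha$ is itself opposite to $C$, i.e. that $\alpha \in \Opp(C)$.

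The key step is therefore the following claim: \emph{a simplex $\alpha$ is opposite to $C$ if and only if each vertex of $\alpha$ is opposite to some vertex of $C$ in a compatible way, and this ``opposite to $C$'' condition is detected on vertices and edges.} Concretely, I would use the characterization of opposition recalled in the excerpt: $\alpha$ and $\beta$ are opposite in an apartment $A$ iff they lie in exactly the same walls of $A$ and are separated by every wall of $A$ not containing them both, and opposite in $\mathcal{B}$ iff they are opposite in every apartment containing both. Fix an apartment $A$ containing $C$ and $\alpha$ (this exists by the building axioms). For each $i$, pick an apartment $A_i \supseteq \{v_i\} \cup C$; since $v_i$ is opposite to a vertex of $C$, the ``opposite'' relation forces $v_i$ to be separated from $C$ by the appropriate walls. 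The point is that membership in a wall and being separated by a wall are conditions about the type of a simplex and its position relative to the reflection hyperplanes, and a simplex $\alpha = v_0 \cdots v_k$ sits in exactly those walls containing all of its vertices and is separated by a wall $H$ iff $H$ separates the carrier of $\alpha$; when the $v_i$ are pairwise adjacent they span a simplex whose carrier behaves correctly with respect to every wall, so the opposition condition for $\alpha$ reduces to the opposition conditions for the vertices $v_i$, which hold by hypothesis. Hence $\alpha \in \Opp(C)$.

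I expect the main obstacle to be bookkeeping the ``same walls / separated by every other wall'' condition carefully enough to see that it is genuinely a flag-type (edge-detected) condition — in particular, ruling out the possibility that $v_0, \dots, v_k$ are pairwise opposite-compatible with $C$ but the spanned simplex $\alpha$ lies in some wall that separates it from $C$ or fails to be separated by a wall that should separate it. This is really where one uses that $\mathcal{B}$ is a building and not an arbitrary complex: the local structure in each apartment is a Coxeter complex, where walls, types, and separation are governed by the Coxeter group, and there the statement is a routine fact about reduced words and the sets $\text{In}(w)$ introduced above (the vertices of a chamber opposite to the identity chamber record exactly which walls separate it, and a subsimplex inherits the corresponding sub-collection). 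Once the apartment-level statement is in hand, the building axioms (existence of an apartment through any two simplices, and the isomorphism axiom) upgrade it to $\mathcal{B}$, and the lemma follows. I would keep the write-up short, citing \cite{ab07} for the apartment-level facts about opposition and reducing the global claim to them.
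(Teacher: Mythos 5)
Your proposal is correct and follows essentially the same route as the paper: both reduce the lemma to showing that $\Opp(C)$ is a full subcomplex of $\mathcal{B}$ (using that the building itself is a flag complex), and verify fullness by passing to an apartment containing both $C$ and the spanned simplex and using the wall characterization of opposition, so that being opposite to the appropriate face of $C$ is detected on vertices. The only difference is cosmetic: you argue the positive direction (all vertices opposite implies the spanned simplex is opposite), while the paper states the contrapositive.
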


\begin{proof}
We will show that $\Opp(C)$ is a full subcomplex. This means that if the vertex set of a simplex in $\mathcal{B}$ is contained in $\Opp(C)$, then the simplex itself is contained in $\Opp(C)$. It implies that $\Opp(C)$ is also a flag complex. 

We prove the contrapositive: suppose $\sigma$ is a simplex not in $\Opp(C)$, so there exists a wall such that $\sigma$ and $C$ lie on the same side. Therefore, all the vertices of $\sigma$ are not opposite to $C$. Therefore, if a simplex has vertex set in $C$, then the simplex is in $C$.
\end{proof}

On the other hand, it is not obvious to us that $H_k(\Opp(C), \mathbb{Z}_2) \ne 0$ for every thick spherical building. In the next lemma, we verify that this does occur for infinitely thick buildings, which will suffice for our purposes. 

\begin{lemma}\label{thick}
If $\mathcal{B}$ has infinite thickness, then there exists a chamber $C$ such that $\Opp(C)$ contains an apartment.
\end{lemma}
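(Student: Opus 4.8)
The plan is to produce a chamber $C$ together with an apartment $A$ all of whose chambers are opposite $C$; then every chamber of $A$, and hence $A$ itself, lies in $\Opp(C)$. (This is in fact stronger than needed: since $A$ is an $n$-sphere sitting inside the $n$-dimensional complex $\Opp(C)$, it even gives $H_n(\Opp(C),\mathbb{Z}_2)\neq 0$ at once, which is what is used in the sequel.)

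First I would record the local observation that drives everything. Write $\delta$ for the Weyl distance on chambers, so $c,d$ are opposite iff $\delta(c,d)=w_0$. Suppose $C$ is opposite a chamber $D$, and let $P$ be a panel of $D$; the chambers through $P$ form a rank-one residue $\mathrm{Res}(P)$, in which all chambers are equidistant from $C$ except the unique nearest one (the gate $\mathrm{proj}_{\mathrm{Res}(P)}(C)$), which is one gallery-step closer. Since $\delta(C,D)=w_0$ has maximal length, $D$ cannot be the nearest chamber (otherwise the others would be \emph{farther} than the diameter), so $D$ sits at the common, maximal distance; hence \emph{every chamber through $P$ other than the gate is opposite $C$}. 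In particular, by infinite thickness one can cross any panel of a chamber opposite $C$ into a new chamber still opposite $C$, while avoiding any prescribed finite set of chambers.

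Next I would build the apartment one chamber at a time, following the standard recursive construction of an apartment through a fixed chamber. Fix any chamber $D_0$ and take $C$ to be the chamber opposite $D_0$ inside some apartment through $D_0$, so $C$ is opposite $D_0$. Index the chambers of the apartment-to-be by $W$, with $E_1=D_0$ and $\delta(D_0,E_w)=w$, and place them in order of increasing $\ell(w)$; when placing $E_w$, it must be adjacent to $E_{ws}$ across the appropriate panel $P_{w,s}$ for each $s\in\In(w)$. If $|\In(w)|=1$, say $\In(w)=\{s\}$, then $E_w$ may be any chamber through $P_{w,s}$ other than $E_{ws}=\mathrm{proj}_{\mathrm{Res}(P_{w,s})}(D_0)$ — an infinite set — and by the local observation all but one of these are opposite $C$, so I choose one that is opposite $C$ and distinct from the finitely many chambers already placed. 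If $|\In(w)|\geq 2$, then $E_w$ is forced, but it is automatically opposite $C$: were it not, adjacency to $E_{ws}$ would force $\delta(E_w,C)=sw_0$ and adjacency to $E_{wt}$ would force $\delta(E_w,C)=tw_0$ for distinct $s,t\in\In(w)$, which is impossible. Either way $E_w$ is opposite $C$, and the construction terminates with an apartment $A$ whose every chamber is opposite $C$.

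The step I expect to be the main obstacle, and where I would be most careful, is confirming that this chamber-by-chamber recipe genuinely produces an apartment — that is, that the residue-by-residue choices are mutually compatible, so the resulting complex is a copy of the Coxeter complex $\Phi_W$ rather than a folded image of it. This is exactly what the standard construction of an apartment through a prescribed chamber guarantees (cf.\ \cite{ab07}); the only new input is that wherever a genuine choice occurs there are infinitely many admissible chambers, so the single extra demand ``opposite $C$'' never obstructs. If one prefers to keep the two issues apart, one can instead take an arbitrary apartment $A\ni D_0$ first and observe that, since $C$ is opposite $D_0\in A$, hopping from chamber to adjacent chamber in $A$ and invoking the local observation at each panel shows $C$ is opposite \emph{every} chamber of $A$ provided that, for each panel $Q$ of $A$, the chamber $\mathrm{proj}_{\mathrm{Res}(Q)}(C)$ lies off $A$; it then suffices to choose $A$ (equivalently, the chamber $D_\infty$ opposite $D_0$ that determines it) avoiding this finite list of forbidden positions, which is again where infinite thickness enters.
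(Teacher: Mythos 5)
Your opening reduction and the local observation are fine: if $C$ is opposite $D$ and $P$ is a panel of $D$, then every chamber of the rank-one residue of $P$ except the gate $\mathrm{proj}_P(C)$ is opposite $C$, and $D$ is not the gate. The gap is in the chamber-by-chamber construction of the apartment, and it is not the routine compatibility issue you propose to outsource to ``the standard construction'' in \cite{ab07}. Your recursion only constrains $E_w$ against the chambers $E_{ws}$, $s\in\In(w)$, while the choices at the steps with $|\In(w)|=1$ are made independently of one another (subject only to oppositeness to $C$ and avoiding finitely many chambers). Nothing forces the already-placed chambers to satisfy the apartment relations $\delta(E_v,E_w)=v^{-1}w$ among themselves, so the chamber you call ``forced'' at a step with $|\In(w)|\ge 2$ need not exist. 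Already in rank $2$ this fails: take $\mathcal{B}$ the flag complex of a projective plane over an infinite field (so infinitely thick), $E_1=(p_0,L_0)$, and follow your recipe: choose $p_1\ne p_0$ on $L_0$ and $L_1\ne L_0$ through $p_0$, then a line $L_2\ne L_0$ through $p_1$ and a point $p_2\ne p_0$ on $L_1$, giving $E_{st}=(p_1,L_2)$ and $E_{ts}=(p_2,L_1)$. The final chamber $E_{w_0}$ must be adjacent to both across the prescribed panels, hence must be the flag $(p_2,L_2)$, which exists only when $p_2=L_1\cap L_2$; since your constraints exclude only finitely many of the infinitely many admissible $p_2$, your rules permit choices for which no closing chamber exists. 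The genuine standard constructions do not have your free/forced structure: an apartment of a spherical building is the convex hull of a pair of opposite chambers, so after one honest choice everything is determined; a correct version of your strategy would have to extend, at each step, a partial isometry of $W$ into $\mathcal{B}$ (respecting Weyl distances to \emph{all} previously placed chambers), and then the claim that infinitely many $C$-opposite extensions remain would need its own argument. Your fallback in the last sentences is circular as stated: the ``finite list of forbidden positions'' consists of the gates $\mathrm{proj}_Q(C)$ over the panels $Q$ of $A$, a list that depends on the apartment $A$ you are still trying to choose.

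Note that the paper's proof runs in the opposite, easier direction and sidesteps all of this: it fixes an arbitrary apartment $A$ once and for all and moves the chamber. If a chamber $\Delta$ is not opposite some chamber $A_i$ of $A$, there is a panel of $\Delta$ crossing which increases the gallery distance to $A_i$, and since for each of the finitely many chambers $A_j$ of $A$ at most one chamber of that panel's residue (the gate) is closer to $A_j$, infinite thickness allows a crossing that increases the distance to $A_i$ without decreasing the distance to any $A_j$; iterating yields a chamber $C$ opposite every chamber of $A$, so $A\subset\Opp(C)$. If you want to keep the ``fix $C$, build $A$'' orientation, you must either fix the apartment in advance in this way or run a genuinely compatible extension argument; independent panel-by-panel choices, as written, do not assemble into an apartment.
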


\begin{proof}
Let $A \subset \mathcal{B}$ be any apartment and $\Delta_0$ be any chamber. If $\Delta_0$ is opposite to all chambers in $A$, then we are done. If not, for each chamber $A_i$ in $A$, there are only finitely many chambers that share a panel with $\Delta_0$ and are closer to $A_i$. Therefore, we can `push' $\Delta_0$ to a new chamber $\Delta_1$ that is further from each $A_i$ (or perhaps the same distance if $\Delta_0$ is already opposite to $A_i$). Continuing in this way, we find a chamber $C_0$ opposite to each $A_i$, and therefore $A_i \in \Opp(C_0)$. 
\end{proof}

\begin{example}
If $k$ is an infinite field and Flag($\mathcal{P}$) the associated spherical building, then it is easy to construct an apartment in $\Opp(C)$ for any chamber $C$. Any chamber $C$ corresponds to a flag of subspaces $C = [v_0] \subset [v_0,v_1] \subset \dots \subset [v_0, v_1, \dots, v_{n-2}]$. The opposite apartment is determined by choosing another basis in general position (e.g. choose all $e_k$ to not lie in $[v_0, v_1 \dots, v_{n-2}]$).\end{example}

\begin{remark}

For the finite thickness case, there are some obvious cases where we can find cycles in $\Opp$. If the thickness of each panel is odd, then $\Opp$ is itself a $\mathbb{Z}_2$-cycle. Note that this implies that $\Opp$ has nontrivial homology if the spherical building contains a spherical subbuilding of odd thickness. Also, for large enough thickness, a simple counting argument shows that $\Opp$ has top homology as it contains more $n$-simplices than $(n-1)$ simplices. 

In general, we do not know if there always exist chambers such that $\Opp$ has top dimensional homology. This may be subtle: in \cite{a}, Abramenko constructs examples of infinitely thick $1$-dimensional buildings where for certain chambers $\Opp$ is a disjoint union of trees. However, other chambers inside these buildings have opposite complexes which contain apartments by Lemma \ref{thick}. Note that higher rank spherical buildings have been fully classified by Tits \cite{tits}. It seems likely that one can construct top dimensional cycles in the opposite complexes for each list in the classification, but the calculations were too hard for the author.
\end{remark}

\section{The Action Dimension Conjecture for lattices in Euclidean buildings}\label{actbuild}

The following theorem is classical. In \cite{ddjo}, an alternate proof is given which also computes the $L^2$-cohomology of arbitrary buildings in terms of the weighted $L^2$-cohomology of an apartment.

\begin{theorem}
If $\Gamma$ acts properly and cocompactly on a thick, $n$-dimensional, Euclidean building, then the $L^2$-cohomology of $\gG$ is concentrated in dimension $n$, and is nontrivial if the building is thick. 
\end{theorem}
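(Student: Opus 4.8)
The plan is to compute the reduced $\ell^2$-cohomology of the building itself and then transfer it to $\gG$. Write $X$ for the (simplicial) thick Euclidean building of dimension $n$. Since a Euclidean building is $\cat(0)$, $X$ is contractible, and $\gG$ acts on it properly, cocompactly and cellularly, so each $\ell^2$-cochain group is $C^i_{(2)}(X)=\bigoplus_{\gs}\ell^2(\gG/\gG_\gs)$ (a Hilbert $\gG$-module because cell stabilizers are finite) and $\overline{H}^i_{(2)}(\gG)\cong\overline{H}^i_{(2)}(X)$. Because $X$ is $n$-dimensional, this cochain complex is concentrated in degrees $0,\dots,n$, which already gives the vanishing $\overline{H}^i_{(2)}(\gG)=0$ for $i>n$.

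The substance is the vanishing for $i<n$. Here I would use the decomposition of $X$ dictated by the building structure, together with the Mayer--Vietoris (double-complex) spectral sequence: its $E_1$-terms are completed direct sums indexed by the simplices $\gs$ of a fundamental chamber, each of the form $\overline{H}^{\ast}_{(2)}(\lk_X(\gs))$ shifted up by $\dim\gs+1$ and weighted by an $\ell^2$/von Neumann factor recording the thickness of $X$. By the residue theory of buildings, each $\lk_X(\gs)$ is a spherical building of dimension $n-\dim\gs-1$, and by the Solomon--Tits theorem such a building is homotopy equivalent to a wedge of spheres of that dimension, so its reduced cohomology lives in a single degree. After the shift, every nonzero $E_1$-entry therefore lands in total degree $n$, the spectral sequence degenerates, and $\overline{H}^i_{(2)}(\gG)=0$ for $i<n$. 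This is precisely the computation carried out for thin buildings in Davis's work and, in general, in the language of weighted $\ell^2$-cohomology of Coxeter groups in \cite{ddjo}; the representation-theoretic argument of Borel--Wallach--Casselman gives the same conclusion for Bruhat--Tits buildings of algebraic groups. I expect this spectral-sequence bookkeeping --- keeping honest track of the weights and of the convergence of the completed sums --- to be the main obstacle; the homotopy input (Solomon--Tits) is off the shelf, and note that it applies equally to thin buildings, which is why concentration in degree $n$ holds with no thickness hypothesis.

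Finally, non-vanishing in degree $n$, which is exactly where thickness enters. Once concentration is known, Atiyah's $L^2$-index theorem ($\chi(\gG)=\sum_i(-1)^i\btwo_i(\gG)$, with $\chi(\gG)$ the orbifold Euler characteristic $\sum_{\gs\subset X/\gG}(-1)^{\dim\gs}/|\gG_\gs|$ read off a fundamental domain) collapses to $\btwo_n(\gG)=(-1)^n\chi(\gG)$. It remains to see that this has sign $(-1)^n$ and is nonzero. In the model case $n=1$ --- a $(q+1)$-regular tree with $q\ge 2$ --- a torsion-free fundamental domain is a finite $(q+1)$-regular graph with strictly more edges than vertices, so $\chi<0$; the higher-dimensional case is the analogous weighted count showing that, once every panel lies in at least three chambers, the alternating sum is dominated by the top-dimensional cells and $(-1)^n\chi(\gG)>0$ (alternatively, invoke the explicit evaluation of this Euler characteristic in \cite{ddjo}). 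This gives $\btwo_n(\gG)>0$ and finishes the proof.
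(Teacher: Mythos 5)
You should first note that the paper does not actually prove this statement: it records it as classical and cites \cite{ddjo} for an alternate (weighted $L^2$) proof, so your proposal has to be measured against the known proofs rather than an argument in the text. The outer skeleton of your proposal (identify $\overline{H}^*_{(2)}(\gG)$ with the reduced $\ell^2$-cohomology of the contractible building $X$, get vanishing above $n$ for dimension reasons, and get nonvanishing in degree $n$ from Atiyah's formula and the orbifold Euler characteristic) is the standard route and is fine. The gap is in the central step, vanishing below degree $n$. Your argument there uses only local data: links of simplices are thick finite spherical buildings, Solomon--Tits puts their reduced cohomology in the top degree, the shifts line up, and ``the spectral sequence degenerates.'' But exactly the same local data occurs in thick \emph{hyperbolic} buildings, where the conclusion is false. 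In Bourdon's right-angled Fuchsian building $I_{p,q}$ every vertex link is the thick spherical building $K_{q,q}$ and every panel lies in $q$ chambers --- locally indistinguishable from a product of two $q$-valent trees, i.e.\ from a thick Euclidean building of type $\tilde A_1\times\tilde A_1$ --- yet for $p=5$, $q=3$ a cocompact lattice has orbifold Euler characteristic $5/9-5/3+1=-1/9<0$, which forces $\btwo_1\neq 0$, so the $L^2$-cohomology is not concentrated in the top degree. An argument that never uses the affine structure beyond the topology of links therefore proves too much and cannot be repaired by bookkeeping; there is also the technical point that reduced $L^2$-cohomology of Hilbert $\gG$-modules is not an exact functor, so Mayer--Vietoris/double-complex spectral sequences do not exist or degenerate ``for free'' in this setting.

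What actually closes the gap is quantitative input tied to Euclidean geometry: Garland-type spectral gap estimates for the Laplacians of the finite thick links (with the representation-theoretic argument of Borel--Wallach and Casselman covering Bruhat--Tits buildings), Dymara--Januszkiewicz for buildings of sufficiently large thickness, and in full generality the weighted $L^2$ machinery of \cite{ddjo}, where the criterion for concentration in the top degree is that the thickness exceeds the reciprocal of the radius of convergence of the Weyl group's growth series; that radius equals $1$ exactly in the affine case, which is why thickness at least $2$ suffices for Euclidean buildings and fails for hyperbolic ones such as $I_{5,3}$. So your proof needs to replace the Solomon--Tits/degeneration step by one of these arguments, or simply cite them, as the paper does. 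A smaller soft spot: your claim that $(-1)^n\chi>0$ follows from a ``simple counting argument dominated by top-dimensional cells'' is immediate only in low dimensions (for $n\le 2$ one can run it by hand); in general you should also take the positivity of $(-1)^n\chi$ from the explicit evaluation in \cite{ddjo} or the classical literature.
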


Therefore, the following is a confirmation of the Action Dimension Conjecture in this case.

\begin{theorem}\label{act2n}
If $\Gamma$ acts properly and cocompactly on a thick, Euclidean building of rank $n$, then $\actdim(\Gamma) \ge 2n$. If $\gG$ is torsion-free, $\actdim(\gG) = 2n$.
\end{theorem}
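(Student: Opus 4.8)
The plan is to realize an embedded $(2n-2)$-obstructor in a boundary of $\Gamma$ and then invoke Theorem \ref{bkk}. Let $X$ be the given thick Euclidean building; it is $\mathrm{CAT}(0)$, hence contractible, and it is locally finite (a thick Euclidean building admitting a cocompact lattice is), so compactifying $X$ by its visual boundary $\partial X$ produces a $\mathcal Z$-structure on $\Gamma$, and $Z:=\partial X$ is a boundary of $\Gamma$. The visual boundary of an $n$-dimensional Euclidean building is, as a $\mathrm{CAT}(1)$ space, the geometric realization of a spherical building $\mathcal B$ of dimension $k:=n-1$; on any finite subcomplex the cone topology agrees with the simplicial topology, so a finite subcomplex of $\mathcal B$ that is simplicially embedded is topologically embedded in $Z$. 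It therefore suffices to find a finite $(2k)$-obstructor embedded in $\mathcal B$, since Theorem \ref{bkk} will then give $\actdim(\Gamma)\ge 2k+2=2n$.

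The first key step is to observe that thickness of $X$ forces $\mathcal B$ to have \emph{infinite} thickness: the chambers of $\mathcal B$ containing a fixed panel $\pi$ are the ends of a ``transverse tree'' sitting inside $X$, this tree inherits thickness from $X$, and a thick tree has infinitely many ends, so $\pi$ lies in infinitely many chambers of $\mathcal B$. Consequently Lemma \ref{thick} applies and yields a chamber $\Delta^+$ of $\mathcal B$ whose opposite complex $\Opp(\Delta^+)$ contains a full apartment $A$. Since $\Opp(\Delta^+)$ is $k$-dimensional it carries no $(k+1)$-chains, so the $\mathbb Z_2$-fundamental class of the simplicial $k$-sphere $A$ is a nonzero $k$-cycle in $\Opp(\Delta^+)$; hence $H_k(\Opp(\Delta^+),\mathbb Z_2)\ne 0$. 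By Lemma \ref{flag}, $\Opp(\Delta^+)$ is moreover a $k$-dimensional flag complex.

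Now the octahedralization machinery of Sections \ref{octa}--\ref{sb} finishes the inequality. Fix a chamber $\Delta\subset\Opp(\Delta^+)$. By Theorem \ref{ados}, $\vkt^{2k}(D_\Delta(\Opp(\Delta^+)))\ne 0$, so $D_\Delta(\Opp(\Delta^+))$ is a $(2k)$-obstructor, and by Lemma \ref{embed} the bending map $F$ restricts to a simplicial embedding of $D_\Delta(\Opp(\Delta^+))$ into $\mathcal B$. As this complex may be infinite, I would pass to a finite subcomplex $K_0\subset D_\Delta(\Opp(\Delta^+))$ carrying a homology class on which $\vkt^{2k}$ evaluates nontrivially; by naturality of the van Kampen obstruction under inclusions, $K_0$ is a finite $(2k)$-obstructor, and $F|_{K_0}$ is a topological embedding of $K_0$ into $\mathcal B\subset Z=\partial X$. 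Theorem \ref{bkk} then gives $\actdim(\Gamma)\ge 2n$. For the second statement, if $\Gamma$ is torsion-free then the proper cocompact action on the contractible complex $X$ is free, so $X/\Gamma$ is a compact $n$-dimensional model for $B\Gamma$ and $\gdim(\Gamma)=n$; Stallings' theorem (quoted in the introduction) gives $\actdim(\Gamma)\le 2\gdim(\Gamma)=2n$, so equality holds.

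I expect the genuinely delicate point to be the infinite-thickness claim for $\mathcal B$: one must set up the transverse-tree description of the panel residues of $\partial X$ and verify that thickness passes from $X$ to those trees, which is precisely what makes Lemma \ref{thick} applicable to \emph{every} thick Euclidean building rather than only to the special cases highlighted in the Remark. A secondary technical point, dealt with above by cutting down to a finite obstructor, is that $\partial X$ carries the cone topology, not the weak simplicial topology used throughout Sections \ref{octa}--\ref{sb}, so the map $F$ gives a bona fide topological embedding only after restriction to a finite subcomplex.
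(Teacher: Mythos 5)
Your proposal is correct and follows essentially the same route as the paper: identify the visual boundary as an infinitely thick spherical building, get $H_{n-1}(\Opp(\Delta^+),\mathbb{Z}_2)\ne 0$ and flagness from Lemmas \ref{flag}--\ref{thick}, produce the embedded $(2n-2)$-obstructor $D_\Delta(\Opp(\Delta^+))$ via Theorem \ref{ados} and Lemma \ref{embed}, and conclude with Theorem \ref{bkk} and Stallings' theorem. The only differences are cosmetic: the paper simply cites Abramenko--Brown (Section 11.8) for the infinite thickness of the boundary building where you sketch the panel-tree argument, and your restriction to a finite subcomplex carrying the obstruction is a sensible way to handle a finiteness/topology point the paper leaves implicit.
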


\begin{proof}
It is well known that the visual boundary of a thick, Euclidean building admits the structure of a thick, spherical building of rank $n-1$ with infinite thickness [Section 11.8, \cite{ab07}]. By Theorem \ref{bkk}, Theorem \ref{ados}, and Lemmas \ref{embed}--\ref{thick}, it follows that $\actdim(\Gamma) \ge 2n$. If $\gG$ is torsion-free, then equality follows from Stallings' theorem since the geometric dimension of $\gG$ is $n$.
\end{proof}

\subsection{More general buildings}

For general buildings, the above strategy fails as we lose the spherical building at infinity. However, there are some specific examples where the action dimension is known. For example, Dymara and Osajda have shown that the boundary of a thick, $n$-dimensional right-angled hyperbolic building is the universal Menger space \cite{do} (these exist for $n \le 4$).  For non right-angled buildings, this was shown for $n = 2$ by Benakli \cite{b}. Therefore, for cocompact lattices acting on such buildings the action dimension is at least twice the dimension of the building. It would be nice to extend Theorem \ref{act2n} to all hyperbolic buildings, where we suspect the same bounds on action dimension should hold. The link of each vertex in a thick, locally finite, hyperbolic building $\mathcal{B}$ is a thick, finite, spherical building. The difficulty here is pushing this link in a compatible way to the building to get a proper expanding map $\text{Cone}(\lk) \rightarrow \mathcal{B}$.

For general buildings, the geometric dimension of lattices can be less than the dimension of the building, so by Stallings' theorem, the action dimension must sometimes be less than twice the dimension of the building. For example, let $L$ be a flag $(n-1)$-dimensional flag complex such that $H_{n-1}(L, \mathbb{Z}_2)  = 0$. Form any graph product $G_L$ such that every vertex group $G_s$ is finite. Then $G_L$ acts properly and cocompactly on a $n$-dimensional thick, locally finite right-angled building $X_L$ \cite{dm}. The geometric dimension of $G_L$ is the same as the geometric dimension of the underlying Coxeter group, which has been explicitly computed and with these assumptions is less than $n$ \cite{ddjmo}.  We conjecture that if $\gG$ acts properly and cocompactly on a locally finite, thick building then $\actdim(\gG) \ge 2\gdim(\gG)$.

\section{$S$-arithmetic groups}\label{S-arithmetic}

In this section, we describe our main application of Theorem \ref{act2n}.  Let $k$ be a number field, i.e. a finite degree extension of $\mathbb{Q}$. Let $G(k)$ be a semisimple linear algebraic group over $k$.  For each finite place $p$ let $\nu_p: k \rightarrow \mathbb{Z}$ denote the corresponding valuation. Let $S$ be a set of finite places of $k$, including the set of infinite places. Let $k_p$ be the completion of $k$ with respect to the norm induced by $\nu_p$. The ring of $S$-integers is defined by: $$\mathcal{O}_S := \{x \in k | \nu_p(x) \ge 0 \hspace{1mm} \text{for all} \hspace{1mm} p \notin S\}.$$ A subgroup of $G(k)$ is an $S$-arithmetic subgroup if it is commensurable with $G(\mathcal{O}_S)$.

\begin{example}
Let $S = \{p_1, p_2, \dots p_m\}$ be a finite set of primes. Each prime determines a $p$-adic valuation $\mathbb{Q} \rightarrow \mathbb{Z}$ which sends $u \rightarrow n$ if $u = p^nx$ and neither the numerator nor the denominator of $x$ is divisible by $p$. The ring of $S$-integers in this case is $\mathbb{Z}_S := \mathbb{Z}[\frac{1}{p_1}, \frac{1}{p_2}, \dots \frac{1}{p_m}]$. An \emph{$S$-arithmetic subgroup} $\Gamma$ of $G(\mathbb{Q})$ is commensurable to $G(\mathbb{Z}_S)$.
\end{example}

For simplicity, we will pass to a torsion-free finite index subgroup $\gG$ of $G(\mathcal{O}_S)$, which always exists in this setting \cite{bs}.
If $S$ consists of only infinite places, then $\Gamma$ is an arithmetic subgroup and acts on a symmetric space $X_\infty$. 
If $S$ has finite places, then for each finite place $\nu$  there is a Euclidean building $X_\nu$ associated to $G(k_\nu)$, and $\Gamma$ acts properly on $X_\infty \times \prod_{\nu_p} X_{\nu_p} $. Furthermore, there is a partial compactification of $X_\infty$ due to Borel-Serre, which we denote $X_\infty^{BS}$. The $\gG$-action extends to a proper and cocompact action on $X_\infty^{BS} \times \prod_{\nu_p} X_{\nu_p}$ \cite{bs}. It will be important for us that if we fix a vertex $v$ in $\prod_{\nu_p} X_{\nu_p}$ and restrict to the $\gG$-action on  $\prod_{\nu_p} X_{\nu_p}$, the stabilizer of $v$ in $\gG$ is an arithmetic subgroup $\gG_\infty$.

Bestvina and Feighn construct a proper, expanding map $f$ from a coned $(\dim(X_\infty) - 2)$-obstructor $\text{Cone}(J)$ into $X_\infty$ that is bounded distance from a $\gG_\infty$-orbit.
Theorem \ref{act2n} gives a proper, expanding map $g$ from a coned $(2\dim(\prod_{\nu_p} X_{\nu_p}) -2)$-obstructor $\text{Cone}(K)$ into $\prod_{\nu_p} X_{\nu_p}$.  

We take the product embedding into $X_\infty \times \prod_{\nu_p} X_{\nu_p}$ and then compose with the inclusion into the Borel-Serre partial compactification: $$h:= f \times g: \text{Cone}(J) \times \text{Cone}(K) \rightarrow  X_\infty^{BS} \times \prod_{\nu_p} X_{\nu_p}$$.

We will now show that $h$ is proper and expanding with respect to a proper $\gG$-invariant metric on $X_\infty^{BS} \times \prod_{\nu_p} X_{\nu_p}$. We first show that such a metric exists. Since $X_\infty^{BS}$ is a manifold with corners, it is separable and metrizable, and hence so is $X_\infty^{BS} \times \prod_{\nu_p} X_{\nu_p}$. Therefore, the following general theorem applies.

\begin{theorem}[Theorem 4.2, \cite{ans}]\label{metric}
Suppose $X$ is $\sigma$-compact. If the action of $\gG$ on $X$ is proper and $X$ is metrizable, then there is a $\gG$-invariant proper metric $d_\gG$ on $X$ that induces the topology of $X$.
\end{theorem}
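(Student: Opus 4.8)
The plan is to produce $d_\gG$ as a ``co-celled chain metric'' assembled from three ingredients, after one preliminary reduction. I would first observe that $X$ is locally compact: a space carrying a proper metric is locally compact, so this is forced by the conclusion, and in any case $X = X_\infty^{BS}\times\prod_{\nu_p}X_{\nu_p}$ is locally compact in the situation at hand. Together with metrizability and $\sigma$-compactness this makes $X$ second countable and makes the orbit space $Y := X/\gG$, with quotient map $q\colon X\to Y$, Hausdorff --- because the action is proper --- as well as locally compact, second countable (as $q$ is open, since $q\minus(q(U)) = \gG U$), hence metrizable, and $\sigma$-compact.

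For the ingredients I would fix: a \emph{proper} compatible metric $\gr$ on $Y$ (these exist on any locally compact, $\sigma$-compact, metrizable space), with pullback $\hat\gr := \gr\circ(q\times q)$, a continuous $\gG$-invariant pseudometric on $X$ with $\hat\gr(x,y)=0$ exactly when $y\in\gG x$; a $\gG$-invariant \emph{compatible metric} $\eta_0$ on $X$ (proper actions on metrizable spaces admit one --- for instance, embed $X$ equivariantly into $\ell^2(\gG)\times[0,1]^{\mathbb{N}}$ using compactly supported cutoff functions and restrict a product of invariant metrics); and a locally finite cover $\{W_\alpha\}_{\alpha\in A}$ of $X$ by precompact open sets on which $\gG$ acts by permuting the indices, $\gamma W_\alpha = W_{\gamma\alpha}$, obtained by pulling back a locally finite precompact open cover of $Y$ and splitting into $\gG$-orbits. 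Since the cells cover $X$ and the nerve of an open cover of a connected space is connected, any two points are joined by a finite chain $x=x_0,\dots,x_m=y$ whose consecutive pairs are \emph{co-celled}, i.e.\ $\{x_{l-1},x_l\}\subset W_{\alpha_l}$ for some $\alpha_l$ (work one component of $X$ at a time in general). Setting $\eta:=\hat\gr+\eta_0$, a $\gG$-invariant compatible metric on $X$ whose induced metric $\bar\eta$ on $Y$ dominates $\gr$, I would define $d_\gG(x,y)$ to be the infimum of $\sum_{l=1}^m\eta(x_{l-1},x_l)$ over all co-celled chains from $x$ to $y$.

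Then I would verify the four required properties. Symmetry and the triangle inequality are immediate from the chain definition; $\gG$-invariance holds because $\gG$ permutes the cells and $\eta$ is $\gG$-invariant; $d_\gG$ vanishes only on the diagonal because every co-celled chain has $\sum_l\eta(x_{l-1},x_l)\ge\eta(x_0,x_m)\ge\eta_0(x_0,x_m)$, so $d_\gG\ge\eta_0$ and $\eta_0$ is a genuine metric; and $d_\gG$ induces the topology of $X$ because it dominates the compatible metric $\eta_0$, while near any point it is dominated by $\eta$ via one-step chains, since close enough points lie in a common (open) cell. For properness, let $B$ be $d_\gG$-bounded, say $d_\gG(x_0,\cdot)\le R$ on $B$. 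Projecting any near-optimal connecting chain to $Y$ and using $\gr(q(a),q(b))\le\eta(a,b)$ gives $\gr(q(x_0),q(y))\le R$ for all $y\in B$, so $B$ lies in the $\gG$-cocompact slab $q\minus(D)$ with $D\subset Y$ compact. Inside that slab the cover involves only finitely many cells modulo $\gG$ and $\hat\gr$ is bounded, so the $d_\gG$-bound bounds the $\eta_0$-length of the connecting chains; since consecutive cells of a chain overlap and the cells are precompact, a step into a ``$\gG$-distant'' cell costs a definite amount of $\eta_0$-length, which confines $B$ to boundedly many cells --- a finite union of precompact sets with compact closure.

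The step I expect to be the main obstacle is the last one: arranging the cover $\{W_\alpha\}$, and the choice of $\eta_0$, to be regular enough that the $\eta$-length of a co-celled chain genuinely controls the combinatorial distance between its endpoints in the nerve of the cover, so that a $d_\gG$-bounded set cannot drift off in the $\gG$-direction. This is a compatibility between the large-scale geometry of $\gG$ and the combinatorics of the cover, and it is precisely here that local compactness and the finiteness of the point stabilizers (via properness of the action) are used in an essential way. The case actually needed in this paper, where $\gG\backslash X$ is compact, is considerably easier: one may discard $\hat\gr$, take the cover finite modulo $\gG$, and observe that the nerve is then a connected, locally finite graph on which $\gG$ acts properly and cocompactly --- hence quasi-isometric to $\gG$ --- so that the required control of the chain metric reduces to a telescoping estimate along chains.
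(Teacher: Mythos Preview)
The paper does not prove this theorem at all: it is quoted verbatim as Theorem~4.2 of Abels--Manoussos--Noskov \cite{ans} and used as a black box to equip $X_\infty^{BS}\times\prod_{\nu_p}X_{\nu_p}$ with a $\gG$-invariant proper metric. So there is no in-paper argument to compare your sketch against; any comparison would have to be with the original proof in \cite{ans}.

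As for your sketch itself, two places deserve more care. First, you take as input a $\gG$-invariant \emph{compatible} metric $\eta_0$; this is already a substantial portion of the conclusion, and your parenthetical justification (an equivariant embedding into $\ell^2(\gG)\times[0,1]^{\mathbb N}$) is not quite right as stated, since the Urysohn factor carries no $\gG$-action and the restricted product metric will not be invariant. One can produce $\eta_0$ (e.g.\ by averaging against a Bruhat cutoff, or via the construction in \cite{ans}), but it is not free. Second, and as you yourself flag, the properness step is incomplete: from $d_\gG\ge\eta\ge\eta_0$ you only get that $d_\gG$-bounded sets are $\eta_0$-bounded, which says nothing unless $\eta_0$ already has some large-scale control in the $\gG$-direction. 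Your heuristic that ``a step into a $\gG$-distant cell costs a definite amount of $\eta_0$-length'' is exactly the missing lemma, and it does not follow from the data you have assembled; a bounded metric $\eta_0$ would make every step cheap. You are right that in the cocompact case one can instead compare the nerve of the cover to a Cayley graph of $\gG$ and extract a genuine lower bound, and since that is the only case the paper needs, the author reasonably chose to cite \cite{ans} rather than reproduce the general argument.
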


Suppose $\gG$ acts by isometries on two metric spaces $X$ and $Y$, and suppose the diagonal action of $\gG$ on $X \times Y$ is proper. Suppose that $X$ admits a partial compactification $\hat X$ such that the $\gG$-action extends to a proper and cocompact action on $\hat X \times Y$. 
Let $d_X$ and $d_Y$ denote the left $\gG$-invariant metrics on $X$ and $Y$, and let $d_\gG$ be a left $\gG$-invariant metric on $\hat X \times Y$ as in Theorem \ref{metric}. We shall prove the following general theorem in the next section.

\begin{theorem}\label{S}
Suppose that there is a proper expanding map $f: \text{Cone}(J) \rightarrow X$ and a proper expanding map $g: \text{Cone}(K) \rightarrow Y$. Let $(x,y) = (f(0),g(0))$, and suppose $\text{Cone}(J)$ maps a bounded distance in the $d_X$-metric from the $\text{Stab}_\gG(y)$-orbit of $(x,y)$. For any $\gG$-invariant proper metric on $\hat X \times Y$, the product map $h = f \times g: \text{Cone}(J) \times \text{Cone}(K) \rightarrow \hat X \times Y$ is proper and expanding.
\end{theorem}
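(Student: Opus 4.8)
The plan is to analyze separately what "proper" and "expanding" mean for the product map and reduce each to the corresponding property of $f$ and $g$, using the extra hypothesis that $\text{Cone}(J)$ maps a bounded $d_X$-distance from a $\text{Stab}_\gG(y)$-orbit in order to control the behavior of the first factor in the partially compactified space $\hat X$. Write $H = \text{Stab}_\gG(y)$, and note that since the $\gG$-action on $\hat X \times Y$ is cocompact, there is a compact fundamental domain $D \subset \hat X \times Y$; moreover the $H$-orbit of $(x,y)$ lies in $\hat X \times \{y\}$, so its closure in $\hat X \times Y$ is governed by how $H$ moves $(x,y)$ inside $\hat X$. First I would prove properness: given a compact set $Q \subset \hat X \times Y$, I want to show $h\minus(Q)$ is bounded in $\text{Cone}(J) \times \text{Cone}(K)$. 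Since $g$ is proper, the projection of $Q$ to $Y$ has preimage under $g$ contained in some bounded set $\text{Cone}(K_0) \subset \text{Cone}(K)$, where I truncate the cone at some radius; the issue is the $J$-factor, since $f$ is proper into $X$ but $h$ lands in $\hat X$, and a sequence escaping to infinity in $\text{Cone}(J)$ could a priori converge to a point of the Borel–Serre-type boundary $\hat X \setminus X$ that lies in $Q$. This is where the bounded-distance hypothesis enters: the image $f(\text{Cone}(J))$ stays within bounded $d_X$-distance of $\{g\cdot(x,y) : g \in H\}$, and because the $\gG$-action on $\hat X \times Y$ is proper with compact quotient, only finitely many translates $g D$ meet $Q$; pulling back through the $H$-orbit structure shows that the portion of $f(\text{Cone}(J))$ that can land in $Q$ stays in a bounded part of $X$, hence (as $f$ is proper into $X$) comes from a bounded part of $\text{Cone}(J)$. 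Combining the two factors gives boundedness of $h\minus(Q)$.

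Next I would prove that $h$ is expanding. Let $\sigma, \tau$ be simplices of $J \times K$ (i.e. of the join-type complex underlying $\text{Cone}(J) \times \text{Cone}(K)$, which one should think of via the Join Lemma as built from disjoint simplices in the two factors) with $\sigma \cap \tau = \emptyset$; by the combinatorics of products/joins, disjointness forces the $J$-coordinates to be disjoint or the $K$-coordinates to be disjoint. If the $K$-coordinates are disjoint, then since $g$ is expanding the images $\text{Cone}$ of those $K$-simplices diverge in $Y$, and because the metric $d_\gG$ on $\hat X \times Y$ restricts to something comparable to $d_Y$ on the $Y$-factor along the relevant rays — here one uses that $d_\gG$ is $\gG$-invariant and proper, so on the cocompact piece it is quasi-isometric to the product metric — divergence in $Y$ pushes forward to divergence in $\hat X \times Y$. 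If instead the $J$-coordinates are disjoint, then $f$ expanding gives divergence of the corresponding cone-images in $X$; I then need that divergence in $X$ survives in $\hat X \times Y$, which again uses the bounded-distance-from-$H$-orbit hypothesis together with properness of $d_\gG$: two subsets of $f(\text{Cone}(J))$ that are $d_X$-far apart and both stay near the $H$-orbit of $(x,y)$ must be $d_\gG$-far apart, since otherwise infinitely many of them would be forced into a single compact set, contradicting properness of $d_\gG$ and properness of the action.

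The main obstacle is exactly this last comparison: the Borel–Serre-type compactification $\hat X$ is \emph{not} a CAT(0) or even geodesic metric space in any canonical way, the metric $d_\gG$ is produced abstractly by Theorem \ref{metric} and is only known to induce the correct topology and be proper and $\gG$-invariant, so one cannot directly estimate $d_\gG$ in terms of $d_X$. The key technical point to nail down is a lemma of the following shape: if $\gG$ acts properly and cocompactly on $\hat X \times Y$ and $A_i$ is a sequence of subsets each contained in a fixed $r$-neighborhood (in $d_X$) of the $H$-orbit of $(x,y)$ with $d_X$-diameter or pairwise $d_X$-distance going to infinity, then their $d_\gG$-diameter also goes to infinity. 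I would prove this by contradiction using the null-sequence/properness features: boundedness in $d_\gG$ of the $A_i$ together with cocompactness would let one translate them into a fixed compact set by group elements in a controlled finite family, and then the bounded-distance-to-$H$-orbit hypothesis lets one transport the $d_X$-estimate back, contradicting that the relevant group elements move $(x,y)$ a bounded $d_X$-amount only finitely often. Everything else — the combinatorics of disjoint simplices in a product of cones, and the reduction of properness of $h$ to properness of $f$ and $g$ — is routine bookkeeping once this comparison lemma is in hand, and indeed the paper signals it will prove Theorem \ref{S} "in the next section," so I expect that section to be devoted essentially to this metric-comparison argument.
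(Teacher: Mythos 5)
Your properness argument is essentially the paper's and can be made rigorous: the ``finitely many translates meet $Q$'' step is exactly what the paper packages into Lemma \ref{boundedx} (and, in the second version of the proof, into the statement that the orbit inclusion is a proper map, Lemma \ref{borno}). Likewise, the comparison lemma you isolate as the key technical point is essentially the paper's Lemma \ref{boundedx}. But note carefully what its proof needs: one translates by elements of $\text{Stab}_\gG(y)$ so as to bring the $X$-coordinates back near $x$, and it is the assumption that the $Y$-coordinates stay a bounded $d_Y$-distance from $y$ that puts the translated points into a compact subset of $\hat X \times Y$, after which properness of the $\gG$-action gives the contradiction. Your phrasing ``contained in a fixed $r$-neighborhood (in $d_X$) of the $\text{Stab}_\gG(y)$-orbit of $(x,y)$'' only constrains the $X$-coordinates; without the bounded-$y$ hypothesis the lemma is not established by this argument (and, as below, is not available where you want to use it).

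The genuine gap is in your treatment of the expanding property. First, the combinatorics are the wrong way around: what Lemma \ref{product} (via the Join Lemma) requires is expansion for disjoint simplices of the join $J*K$, and $\gs*\gt$, $\gs'*\gt'$ are disjoint iff $\gs\cap\gs'=\emptyset$ \emph{and} $\gt\cap\gt'=\emptyset$, not ``or''. Your ``or'' version is in fact false in general: take $\gs=\gs'$, $\gt\cap\gt'=\emptyset$, and points $(j_i,k_0)$, $(j_i,k_0')$ with $k_0,k_0'$ fixed and $j_i\to\infty$; these escape to infinity in both product cones, their images share the same $\hat X$-coordinate and have $Y$-coordinates in a fixed compact set, and translating $f(j_i)$ back near $x$ by elements of $\text{Stab}_\gG(y)$ (which move $g(k_0),g(k_0')$ only within bounded sets) shows the $d_\gG$-distance of the images stays bounded. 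Second, even in the correct ``and'' case, neither of your two cases suffices on its own, so they cannot be treated as alternatives: divergence of the $K$-cones in $Y$ says nothing about pairs of points that escape to infinity purely in the $J$-direction with bounded $K$-parameter; and when the escape is in the $K$-direction the $y$-coordinates $g(k_i)$ are unbounded, so your comparison lemma (which needs bounded $y$-coordinates, as above) does not apply to the $X$-side divergence. The arguments must be interleaved, which is how the paper proceeds: first, bounded $d_\gG$ forces bounded $d_Y$ (Lemma \ref{boundedy} --- a pure cocompactness/translation argument; your justification via ``$d_\gG$ is quasi-isometric to the product metric'' is not available, since $\hat X$ carries no preferred metric, but the equivariance argument works); then expansion \emph{and} properness of $g$ force the $K$-parameters into a compact set; only at that point are the $y$-coordinates bounded, the escape is purely in the $J$-direction, and the $X$-side comparison (Lemma \ref{boundedx}, using the bounded-distance-from-orbit hypothesis) finishes the proof.
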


By the above remarks and Theorem \ref{bkkmain} this theorem implies the following:
\begin{corollary}\label{example}
If $\Gamma$ is an $S$-arithmetic group over a number field then $h: \text{Cone}(J) \times \text{Cone}(K) \rightarrow  X_\infty^{BS} \times \prod_{\nu_p} X_{\nu_p}$ defined as above is proper and expanding, and hence $\actdim(\Gamma) \ge \dim(X_\infty) + \sum_{i = 1}^{|S|} 2\dim(X_\nu).$ 
\end{corollary}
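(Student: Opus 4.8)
The plan is to put $\Gamma$ on the contractible proper metric space $X_\infty^{BS}\times\prod_{\nu_p}X_{\nu_p}$, exhibit there the proper expanding map $h=f\times g$ out of a cone on a large obstructor by checking the hypotheses of Theorem~\ref{S}, and then read off the action dimension bound from Lemma~\ref{product} together with $\obdim\le\actdim$ (Theorem~\ref{bkkmain}).

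First I would fix the vertex $v=(v_{\nu_p})\in\prod_{\nu_p}X_{\nu_p}$ whose $\Gamma$-stabilizer is the arithmetic subgroup $\Gamma_\infty$ (recorded above; the stabilizer of a vertex of a Bruhat--Tits building is compact open, so $\Gamma_\infty=\Gamma\cap\prod\text{Stab}(v_{\nu_p})$ is arithmetic), and take $v$ as the basepoint for the building factor. For the symmetric space factor, Bestvina--Feighn's construction \cite{bf} applied to $\Gamma_\infty$ supplies a proper expanding map $f:\text{Cone}(J)\to X_\infty$ with $J$ a $(\dim X_\infty-2)$-obstructor whose image lies a bounded $d_{X_\infty}$-distance from a single $\Gamma_\infty$-orbit; normalize so that $f(0)=x$ lies on that orbit. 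For the building factor, note that the visual boundary of $\prod_{\nu_p}X_{\nu_p}$ is the spherical join $\ast_{\nu_p}\partial_\infty X_{\nu_p}$ of the spherical buildings at infinity of the factors, each of which is thick of infinite thickness; hence $\partial_\infty(\prod_{\nu_p}X_{\nu_p})$ is again a thick spherical building of infinite thickness. Running the construction of Section~\ref{sb} there (Lemmas~\ref{embed}--\ref{thick} and Theorem~\ref{ados}) produces an embedded obstructor $D_\Delta(\Opp(\Delta^+))$ of dimension $2\dim(\prod_{\nu_p}X_{\nu_p})-2$, and radial projection from $v$ turns it into a proper expanding map $g:\text{Cone}(K)\to\prod_{\nu_p}X_{\nu_p}$ with $g(0)=v$. (Equivalently one may build the obstructor place by place from Theorem~\ref{act2n} and join the pieces via Lemma~\ref{product}; the dimension count is the same.)

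Next I would verify that $(X,Y,\hat X)=(X_\infty,\prod_{\nu_p}X_{\nu_p},X_\infty^{BS})$ satisfies the hypotheses of Theorem~\ref{S}. The group $\Gamma$ acts by isometries on $X_\infty$ through $G(k_\infty)$ and on $\prod_{\nu_p}X_{\nu_p}$ through $\prod G(k_{\nu_p})$; its diagonal action on $X_\infty\times\prod_{\nu_p}X_{\nu_p}$ is proper and extends, by Borel--Serre \cite{bs}, to a proper and cocompact action on $X_\infty^{BS}\times\prod_{\nu_p}X_{\nu_p}$ (this is exactly why we pass to the bordification). A $\Gamma$-invariant proper metric on $X_\infty^{BS}\times\prod_{\nu_p}X_{\nu_p}$ exists by Theorem~\ref{metric}, since this space is $\sigma$-compact and metrizable. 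Finally, with $(x,y)=(f(0),g(0))=(x,v)$ we have $\text{Stab}_\Gamma(y)=\Gamma_\infty$, so $\text{Stab}_\Gamma(y)\cdot(x,y)=\Gamma_\infty\cdot(x,v)$ has $X_\infty$-coordinate $\Gamma_\infty\cdot x$, and by construction $f(\text{Cone}(J))$ lies a bounded $d_{X_\infty}$-distance from $\Gamma_\infty\cdot x$ --- the remaining hypothesis. Theorem~\ref{S} then gives that $h=f\times g:\text{Cone}(J)\times\text{Cone}(K)\to X_\infty^{BS}\times\prod_{\nu_p}X_{\nu_p}$ is proper and expanding, which is the first assertion of the corollary.

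For the second assertion, $X_\infty^{BS}\times\prod_{\nu_p}X_{\nu_p}$ is a contractible proper metric space on which $\Gamma$ acts properly and cocompactly, and $h$ is a proper expanding map out of $\text{Cone}(J)\times\text{Cone}(K)$ with $J$ a $(\dim X_\infty-2)$-obstructor and $K$ a $(2\dim(\prod_{\nu_p}X_{\nu_p})-2)$-obstructor; so Lemma~\ref{product} yields $\obdim(\Gamma)\ge\dim(X_\infty)+\sum_{i=1}^{|S|}2\dim(X_\nu)$, and Theorem~\ref{bkkmain} gives $\actdim(\Gamma)\ge\obdim(\Gamma)\ge\dim(X_\infty)+\sum_{i=1}^{|S|}2\dim(X_\nu)$. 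The two steps I expect to carry the weight, rather than the bookkeeping, are: verifying the ``bounded distance from the $\text{Stab}_\Gamma(y)$-orbit'' hypothesis --- which forces one to identify $\text{Stab}_\Gamma(v)$ with the arithmetic group $\Gamma_\infty$ and to use the coarse $\Gamma_\infty$-equivariance built into the Bestvina--Feighn obstructor --- and producing the obstructor $K$ of the correct dimension on the (generally non-cocompact) factor $\prod_{\nu_p}X_{\nu_p}$, which rests on identifying $\partial_\infty(\prod_{\nu_p}X_{\nu_p})$ as a thick spherical building of infinite thickness and running the Section~\ref{sb} construction there. Properness of the diagonal action, the existence of $X_\infty^{BS}$ and of an invariant proper metric, and contractibility are standard or already in place.
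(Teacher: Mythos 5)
Your proposal is correct and follows essentially the same route as the paper: Bestvina--Feighn's obstructor for the symmetric space factor mapped close to a $\Gamma_\infty$-orbit, the Section \ref{sb} obstructor coned radially into $\prod_{\nu_p} X_{\nu_p}$, Theorem \ref{S} (with Theorem \ref{metric} and the Borel--Serre action) for properness and expansion of $h$, and the product/join lemma together with Theorem \ref{bkkmain} for the dimension bound. The only difference is that you make explicit the identification of $\partial_\infty\bigl(\prod_{\nu_p} X_{\nu_p}\bigr)$ as a thick spherical building of infinite thickness (the join of the factor boundaries), a detail the paper leaves implicit when it cites Theorem \ref{act2n} even though $\Gamma$ does not act cocompactly on the building factor alone.
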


\subsection{Proof of Theorem \ref{S}}
We first need the following two lemmas. 

\begin{lemma}\label{boundedy}
Let $(x_i,y_i)$ and $(x_i',y_i')$ in $\hat X \times Y$ be a pair of sequences.  
If $d_Y(y_i, y_i') \rightarrow \infty$, then $d_\gG((x_i,y_i), (x_i',y_i')) \rightarrow \infty$.
\end{lemma}

\begin{proof}
By contradiction, suppose there exists a subsequence $(x_i,y_i), (x_i',y_i')$ with $d_\gG((x_i,y_i), (x_i',y_i'))$ uniformly bounded.
Since the $\gG$-action on $\hat X \times Y$ is cocompact, there exists $\gamma_i \in \gG$ such that $(\gamma_ix_i,\gamma_iy_i)$, and hence $(\gamma_ix_i',\gamma_iy_i')$ are contained in a compact set $K$. We still have $d_Y(\gamma_iy_i,\gamma_iy_i') \rightarrow \infty$ since $d_Y(\gamma y_i, \gamma y_i') = d_Y(y_i, y_i')$. 
Since $K$ projects to a compact set in $Y$, this is a contradiction.
\end{proof}

\begin{lemma}\label{boundedx}
Let $(x_i,y_i)$ and $(x_i',y_i')$ in $X \times Y$ be a pair of sequences.  
Assume that $y_i$ remains bounded $d_Y$-distance from a basepoint $y_0$.
Suppose that $x_i$ and $x_i'$ are bounded $d_X$-distance from the $\text{Stab}_\gG(y_0)$-orbit of a basepoint $x_0$.
If $d_X(x_i, x_i') \rightarrow \infty$, then $d_\gG((x_i,y_i), (x_i',y_i')) \rightarrow \infty$.
\end{lemma}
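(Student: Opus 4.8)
The plan is to argue by contradiction, translating the whole configuration by a single group element into a fixed compact subset of $\hat X\times Y$ and then invoking properness of the $\gG$-action. Suppose the conclusion fails, so that after passing to a subsequence $d_\gG((x_i,y_i),(x_i',y_i'))\le M$ for some fixed $M$ and all $i$. First I would upgrade the hypotheses so that $y_i'$, and not merely $y_i$, stays a bounded distance from $y_0$: if $d_Y(y_i,y_i')$ were unbounded along our subsequence, then along a further subsequence $d_Y(y_i,y_i')\to\infty$, whence $d_\gG((x_i,y_i),(x_i',y_i'))\to\infty$ by Lemma \ref{boundedy}, contradicting $d_\gG\le M$; so $d_Y(y_i,y_i')$ is bounded and hence $d_Y(y_i',y_0)\le C_0'$ for some fixed $C_0'$. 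Using the hypothesis on the $x$-coordinates, choose $\delta_i,\delta_i'\in\text{Stab}_\gG(y_0)$ with $d_X(x_i,\delta_ix_0)\le C_1$ and $d_X(x_i',\delta_i'x_0)\le C_1$ for a uniform constant $C_1$.

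Next I would translate. Since $\delta_i^{-1}\in\text{Stab}_\gG(y_0)$ acts by isometries on $X$, on $Y$ and on $\hat X\times Y$, and fixes $y_0$, the point $\delta_i^{-1}(x_i,y_i)$ lies in $\mathcal{Q}_0:=\overline{B}_X(x_0,C_1)\times\overline{B}_Y(y_0,C_0)$, which is compact because $X$ and $Y$ are proper metric spaces, and hence compact in $\hat X\times Y$. Since $d_\gG$ is proper and $\gG$-invariant, the closed $M$-neighbourhood $\mathcal{Q}_1$ of $\mathcal{Q}_0$ is again compact, and applying $\delta_i^{-1}$ to $d_\gG((x_i,y_i),(x_i',y_i'))\le M$ forces $\delta_i^{-1}(x_i',y_i')\in\mathcal{Q}_1$. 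On the other hand, applying $(\delta_i')^{-1}$ and using the bound on $y_i'$ obtained above, the point $(\delta_i')^{-1}(x_i',y_i')$ lies in the fixed compact set $\mathcal{Q}_2:=\overline{B}_X(x_0,C_1)\times\overline{B}_Y(y_0,C_0')$. Setting $h_i:=\delta_i^{-1}\delta_i'\in\text{Stab}_\gG(y_0)$, we have $h_i^{-1}\bigl(\delta_i^{-1}(x_i',y_i')\bigr)=(\delta_i')^{-1}(x_i',y_i')$, so $h_i^{-1}\mathcal{K}\cap\mathcal{K}\ne\emptyset$ for the compact set $\mathcal{K}:=\mathcal{Q}_1\cup\mathcal{Q}_2$; by properness of the $\gG$-action on $\hat X\times Y$, the set $\{h_i\}$ is finite.

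The contradiction is then immediate: $d_X(x_0,h_ix_0)=d_X(\delta_ix_0,\delta_i'x_0)\ge d_X(x_i,x_i')-2C_1\to\infty$, which is impossible when $\{h_i\}$ is finite; hence $d_\gG((x_i,y_i),(x_i',y_i'))\to\infty$, as wanted. I expect the translation step to be the crux, and it is also what forces this lemma to carry more hypotheses than Lemma \ref{boundedy}: the topology of $\hat X$ collapses the $d_X$-metric near its boundary, so a priori $d_X(x_i,x_i')\to\infty$ is compatible with $d_\gG$ staying bounded; the two extra hypotheses --- that the $x$'s track a single $\text{Stab}_\gG(y_0)$-orbit and that the $y$'s stay bounded --- are exactly what lets one carry the configuration back into a compact region where properness can be brought to bear. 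The one technical point to verify along the way is that the $d_X$- and $d_Y$-bounded balls used above really do have compact closure in $\hat X\times Y$, which again rests on $X$ and $Y$ being proper metric spaces.
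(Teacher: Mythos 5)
Your proposal is correct and takes essentially the same route as the paper's proof: argue by contradiction, translate by elements $\delta_i,\delta_i'\in\text{Stab}_\gG(y_0)$ carrying $x_i,x_i'$ close to $x_0$, and use properness of the $\gG$-action on $\hat X\times Y$ to force the relative elements $\delta_i^{-1}\delta_i'$ into a finite set, contradicting $d_X(x_i,x_i')\to\infty$. If anything, your write-up is more complete, since you make explicit two points the paper leaves implicit: that $d_Y(y_i',y_0)$ stays bounded (via Lemma \ref{boundedy}) and that the relevant $d_X$- and $d_Y$-balls have compact image in $\hat X\times Y$.
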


\begin{proof}
Again, assume by contradiction that there exist subsequences $(x_i,y_i)$ and $(x_i',y_i')$, with $d_\gG((x_i,y_i), (x_i',y_i'))$ uniformly bounded. 
Choose $\gamma_i$ in $\text{Stab}_\gG (y_0)$ such that $d_X(\gamma_ix_i,x_0)$ is uniformly bounded. Since we assume $y_i$ are in a compact set of $Y$, we know that $d_Y(\gamma_iy_i, y_0)$ is uniformly bounded. The sequence $(\gamma_ix_i,\gamma_iy_i)$ is contained in a compact set in $X \times Y$, which implies $d_\gG((\gamma_ix_i,\gamma_iy_i), (x_0,y_0))$ is uniformly bounded.  Therefore by assumption $d_\gG((\gamma_ix_i',\gamma_iy_i'), (x_0,y_0))$ is uniformly bounded.

We have $d_X(\gamma_ix_i',x_0)  \rightarrow \infty$ since $d_X(\gamma_ix_i, \gamma_ix_i') \rightarrow \infty$ and $d_X(\gamma_ix_i,x_0)$ is uniformly bounded.
Choose $\gamma_i'$ in $\text{Stab}_\gG (y_0)$ such that $d_X(\gamma_i'x_i',x_0)$ is uniformly bounded. 
Then the sequence $d_\gG((\gamma_i(\gamma_i')^{-1}x_0, y_0),(x_0,y_0))$ is uniformly bounded
Since $d_X(x_i,x_i') \rightarrow \infty$, we have the distance in the word metric of $\gG$ between $\gamma_i$ and $\gamma_i' \rightarrow \infty$, which contradicts the properness of the $\gG$-action on $\hat X \times Y$.
\end{proof}

Now, we can prove Theorem \ref{S}.

\begin{proof}
We show that $f \times g$ is proper and expanding. We will let $(j,k)$ denote points in $\text{Cone}(J) \times \text{Cone}(K)$.

\emph{Proper:} Let $(f(j_0),g(k_0))$ be the image of the cone point. 
Suppose $f \times g$ is not proper. Then there is a sequence $(j_i,k_i)$ which leaves every compact set of $\text{Cone}(J) \times \text{Cone}(K)$ such that $h(j_i, k_i)$ is contained in a compact set $C$ of $\hat X \times Y$. 
Since $C$ projects to a compact set in $Y$ and $g$ is proper, the $k_i$ are contained in a compact subset of $\text{Cone}(K)$. Therefore, the $j_i$ leave every compact set in $\text{Cone}(J)$, and since $f$ is proper, this implies that $d_X(f(j_0),f(j_i)) \rightarrow \infty$ which contradicts Lemma \ref{boundedx}. 
 
 \emph{Expanding:}  Assume $\gs \times \gt$ and $\gs' \times \gt'$ are disjoint simplices in $\text{Cone}(J) \times \text{Cone}(K)$. Suppose $f \times g$ is not expanding. Then there are sequences $(j_i,k_i)$ in $\text{Cone}(\sigma) \times Cone(\tau)$ and $(j_i',k_i')$ in $\text{Cone}(\sigma') \times \text{Cone}(\tau')$, which leave every compact set, and have $d_\gG(h(j_i,k_i), h(j_i',k_i'))$ uniformly bounded. By Lemma \ref{boundedy}, this implies that $d_Y(k_i,k_i')$ is uniformly bounded. Since $g$ is expanding, this implies that $k_i$ and $k_i'$ are contained in a compact set. Therefore, $j_i$ and $j_i'$ leave every compact set, and since $f$ is expanding we have $d_X(f(j_i), f(j_i')) \rightarrow \infty$, which contradicts Lemma \ref{boundedx}.
\end{proof}

\begin{example}
Let $\gG = SL_2(\mathbb{Z}[1/2])$. In this case, $\gG$ acts properly on $\mathbb{H}^2 \times T$, where $T$ is a trivalent tree. The obstructor complex we want to use is $\text{Cone}(K_{3,3})   \cong \text{Cone}(3\text{pts}) \times \text{Cone}(3\text{pts})$. In the trivalent tree, we map $\text{Cone}(3\text{pts})$ by choosing $3$ disjoint rays emanating from a fixed base point. 
In $\mathbb{H}^2$, which we identify with the upper half plane, we send $\text{Cone}(3\text{pts})$ to the orbit of $i$ under the matrices: \[
   \{
  \left[ {\begin{array}{cc}
   1 & t \\       
   0 & 1
       \end{array} } \right] | \hspace{1mm}t \in \mathbb{R} \} \hspace{1mm} \bigcup \hspace{1mm} \{  \left[ {\begin{array}{cc}
   1 & 0 \\       
   t & 1
       \end{array} } \right] | \hspace{1mm}t \in \mathbb{R}^+
       \}
\]

Unfortunately, one cannot immediately use the product embedding $\text{Cone}(3\text{pts}) \times \text{Cone}(3\text{pts}) \rightarrow \mathbb{H}^2 \times T$.  This map is obviously proper and expanding, and if the image was bounded distance from an $SL_2(\mathbb{Z}[1/2])$-orbit we could take a neighbourhood of the orbit to see that $\obdim(SL_2(\mathbb{Z}[1/2])) = 4$. The difficulty here is that the image is \emph{not} bounded distance from any orbit. It seems in this case that one could perturb the product embedding to stay within a bounded distance of an orbit, but for the general case we chose to keep our simple definition of the product embedding and lose the nice structure of the product metric on $\mathbb{H}^2 \times T$. 

\end{example}

\begin{remark}
These obstructor methods also apply for certain $S$-arithmetic groups over function fields. Let $K$ be the function field of an irreducible projective smooth curve $C$ defined over a finite field $k := \mathbb{F}_q$. Let $S$ be a finite nonempty set of (closed) points of $C$, and let $\mathcal{O}_S < K$ be the ring of functions that have no poles except possibly at points in $S$. If $S$ is a single point $p$, it determines a valuation $\nu_p: K \rightarrow \mathbb{Z}$ which assigns to a function its order of vanishing at $p$. 

If $G$ is a linear algebraic group over $K$, then any group commensurable to $G(\mathcal{O}_S)$ is an $S$-arithmetic subgroup. Similarly as in the number field case, to each $S$-arithmetic group there is a corresponding Euclidean building $X_S$, and in this case $G(\mathcal{O}_S)$ acts properly on $X_S$, with no symmetric space factor. In the general setting, the action is not cocompact, and we cannot conclude anything from Theorem \ref{act2n}. However, we do have the following theorem:
\end{remark}
\begin{theorem}
For $K$ and $S$ as above, if $G(\mathcal{O}_S)$ acts cocompactly on $X_S$, then $\actdim(G(\mathcal{O}_S)) \ge 2\dim X_S$ (this is true exactly when the $K$-rank of $G = 0$.)
\end{theorem}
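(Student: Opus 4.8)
The plan is to deduce this as an essentially immediate consequence of Theorem \ref{act2n}; the only real work is to recognize $X_S$ as a thick Euclidean building on which $\Gamma := G(\mathcal{O}_S)$ acts properly, and to recall when the cocompactness hypothesis holds. Write $X_S = \prod_{p\in S} X_p$, where $X_p$ is the Bruhat--Tits building of $G(K_p)$, a thick Euclidean building of dimension $\rk_{K_p}(G)$, so that $\dim X_S = \sum_{p\in S}\rk_{K_p}(G)$. The group $\Gamma$ sits as a discrete subgroup of $\prod_{p\in S}G(K_p)$ via the diagonal embedding (discreteness because $\mathcal{O}_S$ is discrete in $\prod_{p\in S}K_p$), hence acts properly on $X_S$; by hypothesis the action is also cocompact. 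A product of thick Euclidean buildings is again a thick Euclidean building: its apartments are products of affine Coxeter complexes, which are the Coxeter complexes of the (again affine) product Coxeter groups, and a codimension-one face of a product chamber lies in at least three chambers because the corresponding face in some factor does. Thus Theorem \ref{act2n} applies verbatim and gives $\actdim(\Gamma)\ge 2\dim X_S$. Only the inequality half of that theorem is used, so no torsion-freeness of $\Gamma$ is needed.

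If one prefers to avoid the product-of-buildings formalism, one can argue directly on the boundary, exactly as in the proof of Theorem \ref{act2n}. The visual boundary $\pinfty X_S$ is the join $\ast_{p\in S}\pinfty X_p$, and each $\pinfty X_p$ is an infinitely thick spherical building of dimension $\rk_{K_p}(G)-1$ by [Section 11.8, \cite{ab07}]. For each $p$, Lemmas \ref{thick} and \ref{embed} together with Theorem \ref{ados} furnish an embedded $(2\rk_{K_p}(G)-2)$-obstructor $D_{\Delta_p}(\Opp(\Delta_p^+))$ inside $\pinfty X_p$. Since $\Gamma$ acts properly and cocompactly on the $\cat(0)$ space $X_S$, compactifying by $\pinfty X_S$ yields a $\mathcal{Z}$-structure; iterating the Join Lemma (equivalently, Lemma \ref{product}), the join over $p\in S$ of these obstructors is an embedded obstructor in $\pinfty X_S$ of dimension $\sum_{p\in S}(2\rk_{K_p}(G)-2)+2(|S|-1)=2\dim X_S-2$. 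Theorem \ref{bkk} then yields $\actdim(\Gamma)\ge 2\dim X_S$.

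It remains to address the parenthetical assertion that the cocompactness hypothesis holds exactly when $\rk_K(G)=0$, i.e.\ when $G$ is $K$-anisotropic. This is the function-field form of the Borel--Harish-Chandra compactness criterion, due to Harder and Behr: if $\rk_K(G)=0$ then $\Gamma$ is a cocompact lattice in $\prod_{p\in S}G(K_p)$ and therefore acts cocompactly on $X_S$, while if $\rk_K(G)>0$ then $G$ contains a nontrivial $K$-split torus and $X_S/\Gamma$ is noncompact by reduction theory. One simply cites this; it is a side remark about when the hypothesis applies rather than part of the action-dimension statement.

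The only genuine subtlety, and the step I would be most careful with, is the first paragraph's claim that the product $X_S$ honestly meets the ``thick Euclidean building'' hypothesis of Theorem \ref{act2n} (including that its spherical boundary has infinite thickness, which is inherited factor by factor through the join). This is why the boundary argument of the second paragraph is worth keeping as a fallback, since it invokes only the spherical-building structure of each individual factor $\pinfty X_p$ together with the Join Lemma. Everything else is routine bookkeeping or a citation.
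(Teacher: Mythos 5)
Your proposal is correct and matches the paper's (implicit) argument: the paper states this theorem as an immediate consequence of Theorem \ref{act2n} once one knows the action of $G(\mathcal{O}_S)$ on $X_S$ is proper, and cocompact precisely when $\operatorname{rank}_K(G)=0$ (Harder's compactness criterion), which is exactly your first and third paragraphs. Your extra care about the product $\prod_{p\in S}X_p$ being a thick Euclidean building (or, alternatively, the join-of-obstructors argument on the boundary) is consistent with how the paper itself already treats products of buildings in the number-field case, so it is the same route, just spelled out in more detail.
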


\begin{bibdiv}
	\begin{biblist}
	
		\bib{ans}{article} {
		
		AUTHOR = {Abels, Herbert}, author = {Manoussos, Ad}, author = {Noskov, Grer}, TITLE = {Proper actions and proper invariant metrics}, 
		
		Journal = {J. London Math. Soc.},
		Volume = {83},
		Year = {2011},
		Number = {2},
		pages = {619-636},
		 }

	\bib{a}{article} {
		
		AUTHOR = {Abramenko, Peter}, TITLE = {Twin buildings and applications to $S$-arithmetic subgroups}, DATE = {1996}, JOURNAL = {Springer Lecture Notes in Mathematics},}

		\bib{ab07}{book} {
		
		AUTHOR = {Abramenko, Peter}, author = {Brown, Kenneth }, TITLE = {Buildings: Theory and Applications}, DATE = {2008}, Publisher = {Springer},}

		\bib{ados}{article} {
		
		AUTHOR = {Avramidi, Grigori}, author = {Davis, Michael W.}, author = {Okun, Boris},author = {Schreve, Kevin}, TITLE = {Action dimension of right-angled Artin groups}, 
		
		Journal = {Bull. London Math. Soc}, 
		Year = {2016},
		Volume = {48},
		Number = {1},
		Pages = {115-126},
		
		}

			\bib{b}{article}{

			author = {Benakli, Nadia},
			title = {Polyhedres hyperboliques, passage du local au global},
			note = {Thesis, Universite e Paris Sud},
			year = {1992},
			}

		\bib{bf}{article} {

		AUTHOR = {Bestvina, Mladen}, author = {Feighn, Mark}, TITLE = {Proper actions of lattices on contractible manifolds}, JOURNAL = {Invent.
		Math.},
		
		VOLUME = {150}, YEAR = {2002}, NUMBER = {2}, PAGES = {237--256}, ISSN = {0020-9910},
		
		URL = {http://dx.doi.org.proxy.lib.ohio-state.edu/10.1007/s00222-002-0239-6}, }
		
		\bib{bkk}{article} {author = {Bestvina, Mladen}, author = {Kapovich, Michael}, author = {Kleiner, Bruce}, TITLE = {Van {K}ampen's embedding obstruction for discrete groups}, JOURNAL = {Invent.
		Math.},
		
		VOLUME = {150}, YEAR = {2002}, NUMBER = {2}, PAGES = {219--235}, ISSN = {0020-9910},
	
		URL = {http://dx.doi.org.proxy.lib.ohio-state.edu/10.1007/s00222-002-0246-7}, } 	
				
		\bib{bs}{article} {
		author = {Borel, Armand},  
		author = {Serre, J-P}, 
		TITLE = {Cohomologie d’immeubles et de groupes S-arithmetiques}, 
		JOURNAL = {Topology},
		VOLUME = {15}, 
		YEAR = {1976}, 
		PAGES = {211-232},
		 }	

\bib{dh}{article} {
    AUTHOR = {Davis, M. W.}, AUTHOR = {Huang, Jingying},
     TITLE = {Determining the action dimension of an Artin group by using its complex of abelian subgroups},
  NOTE = {arXiv:1608.03572v2}

}

\bib{dm}{article} {
    AUTHOR = {Davis, M. W.}, AUTHOR = {Meier, John},
     TITLE = {The topology at infinity of Coxeter groups and buildings},
  journal = {Comm. Math. Helv},
  date = {2002},
  pages = {1-21},
  volume = {77},

}

		\bib{ddjo}{article}{
		AUTHOR = {Davis, Michael W.}, AUTHOR = {Dymara, Jan}, AUTHOR = {Januskiewicz, Tadeusz}, AUTHOR = {Okun, Boris}, TITLE = {Weighted {$\ell^2$}-cohomology of {C}oxeter groups}, JOURNAL = {Geom.
		Topol.},
		VOLUME = {11}, YEAR = {2007}, PAGES = {47--138}, 
	}
	
			\bib{ddjmo}{article}{
	
		AUTHOR = {Davis, Michael W.}, AUTHOR = {Dymara, Jan}, AUTHOR = {Januskiewicz, Tadeusz}, AUTHOR = {Meier, John}, AUTHOR = {Okun, Boris}, TITLE = {Compactly supported cohomology of buildings}, JOURNAL = {Comment. Math. Helv.},
		
		VOLUME = {85}, YEAR = {2010}, PAGES = {551-582}, 
		
	}

\bib{dl}{article} {
    AUTHOR = {Davis, M. W.}, AUTHOR = {Leary, I. J.},
     TITLE = {The {$l^2$}-cohomology of {A}rtin groups},
   JOURNAL = {J. London Math. Soc. (2)},
    VOLUME = {68},
      YEAR = {2003},
    NUMBER = {2},
     PAGES = {493--510},
      ISSN = {0024-6107},

       URL = {http://dx.doi.org.proxy.lib.ohio-state.edu/10.1112/S0024610703004381},
}

		\bib{do01}{article}{
		AUTHOR = {Davis, Michael W.}, AUTHOR = {Okun, Boris}, TITLE = {Vanishing theorems and conjectures for the {$\ell^2$}-homology of right-angled {C}oxeter groups}, JOURNAL = {Geom.
		Topol.},
		VOLUME = {5}, YEAR = {2001}, PAGES = {7--74}, ISSN = {1465-3060},
		URL = {http://dx.doi.org.proxy.lib.ohio-state.edu/10.2140/gt.2001.5.7}, }
		
			\bib{do}{article} {
		author = {Dymara, Jan},  
		author = {Osajda, Damian}, 
		TITLE = {Boundaries of right-angled hyperbolic buildings}, 
		JOURNAL = {Fundamenta Mathematicae},
		VOLUME = {197}, 
		YEAR = {2007}, 
		PAGES = {123-165},
		 }	

		\bib{desp}{unpublished}{ title ={Action Dimension of Mapping Class Groups},
		 author ={Despotovic, Zrinka}, 
		 year = {2006}, 
		
		 }

\bib{eck00}{article}{
		author={Eckmann, Bl},
		 title={Introduction to $\ell^2$-methods in topology: reduced $\ell^2$-homology, harmonic chains, and $\ell^2$-Betti numbers },
		 journal={Israeli Journal of Mathematics},
		 volume={117},
		 date={2000},
	
		 pages={183-219)},
		
		 }

\bib{g91}{article}{
      author={Gromov, M.},
       title={{K}\"ahler hyperbolicity and ${L}_2$-{H}odge theory},
        date={1991},
        ISSN={0022-040X},
     journal={J. Differential Geom.},
      volume={33},
      number={1},
       pages={263\ndash 292},
      review={\MR{92a:58133}},
}

	\bib{ji}{article}{
		author={Ji, Lizhen},
		title={Buildings and their applications in geometry and topology},
		journal = {Asian J. of Math},
		date={2006}, volume = {1}, pages = {011–080},
		}

\bib{m00}{article}{
	Author = {McMullen, Curtis T.},
	Journal = {Annals of Mathematics. Second Series},
	Issn = {0003-486X},
	Journal = {Ann. of Math. (2)},
	Number = {1},
	Pages = {327--357},
	Title = {The moduli space of {R}iemann surfaces is {K}ähler hyperbolic},
	Volume = {151},
	Year = {2000},
}

		\bib{os14}{article}{
		author={Okun, Boris},
		author={Schreve, Kevin},
		title={The ${L}^2$-(co)homology of groups with hierarchies},
		date={2016}, journal={Alg. and Geom. Top.}, 
		}

		 \bib{stallings}{article}{
	author = {Stallings, J.R.},
	title = {Embedding homotopy types into manifolds},
	date = {1965}, note={unpublished},
	url = {http://math.berkeley.edu/~stall/embkloz.pdf} }
		
			\bib{tw14}{article}{
		author={Tancer, Martin},
		author={Vorwerk, Katherine},
		title={Non-embeddability of geometric lattices and buildings},
		date={2014}, journal={Discrete and Computational Geometry}, 
		volume = {47}
		}
		
		\bib{tits}{book}{
		author={Tits, Jacques},
		
		title={Buildings of spherical type and finite BN-pairs},
		date={1974}, journal={Lecture Notes in Mathematics}, publisher = {Springer-Verlag},
		volume = {386}
		}

	 \bib{vk}{article}{
	author = {van Kampen, E.~R.},
	title = {Komplexe in euklidischen Raumen},
	journal = {Abh. Math. Sem. Univ. Hamburg},
	volume = {9}, YEAR = {1933},
	number = {1},
	pages = {72--78},

 }
				
			\end{biblist}
\end{bibdiv}

\end{document}